\newtheorem{theorem}{Theorem}[section]
\newtheorem{lemma}[theorem]{Lemma}
\newtheorem{proposition}[theorem]{Proposition}
\theoremstyle{definition}
\newtheorem{remark}[theorem]{Remark}
\numberwithin{equation}{section}
\renewcommand{\div}{\mathrm{div}\,}    
\newcommand{\R}{\mathbb{R}}
\newcommand{\T}{\mathbb{T}}
\newcommand{\C}{\mathbb{C}}
\newcommand{\Z}{\mathbb{Z}}
\newcommand{\N}{\mathbb{N}}
\newcommand{\PP}{\mathbb{P}}
\newcommand{\ve}{\varepsilon}
\newcommand\restr[2]{{
  \left.\kern-\nulldelimiterspace 
  #1 
  \vphantom{\big|} 
  \right|_{#2} 
  }}
\newcommand{\divh}{\mbox{div}_H \;}
\newcommand{\vertiii}[1]{
{\left\vert\kern-0.25ex\left\vert\kern-0.25ex\left\vert #1 
    \right\vert\kern-0.25ex\right\vert\kern-0.25ex\right\vert}
}
\title[The Primitive Equations in the scaling invariant space $L^{\infty}(L^1)$]{The Primitive Equations in the scaling invariant space $L^{\infty}(L^1)$}
\subjclass[2010]{Primary: 35Q35; Secondary: 76D03, 47D06, 86A05.}
\keywords{primitive equations, rough data, global string well-posedness \\
This work was partly supported by the DFG International Research Training Group IRTG 1529 and the JSPS Japanese-German Graduate Externship on Mathematical Fluid Dynamics. 
The first author is partly supported by JSPS through grant Kiban S (No. 26220702), Kiban A (No. 17H01091), Kiban B (No. 16H03948) and the second and fourth author are supported by IRTG 1529 at TU Darmstadt.}
\author{Yoshikazu Giga} 
\address{Graduate School of Mathematical Sciences, University of Tokyo, Komaba 3-8-1, Meguro-ku, Tokyo, 153-8914, Japan }
\email{labgiga@ms.u-tokyo.ac.jp}
\author{Mathis Gries} 
\address{Departement of Mathematics,
	TU Darmstadt, Schlossgartenstr. 7, 64289 Darmstadt, Germany}
\email{gries@mathematik.tu-darmstadt.de}
\author{Matthias Hieber} 
\address{Departement of Mathematics,
	TU Darmstadt, Schlossgartenstr. 7, 64289 Darmstadt, Germany}
\email{hieber@mathematik.tu-darmstadt.de}
\author{Amru Hussein} 
\address{Departement of Mathematics,
	TU Darmstadt, Schlossgartenstr. 7, 64289 Darmstadt, Germany}
\email{hussein@mathematik.tu-darmstadt.de}
\author{Takahito Kashiwabara}
\address{Graduate School of Mathematical Sciences, The University of Tokyo, 3-8-1 Komaba, Meguro, Tokyo 153-8914, Japan}
\email{tkashiwa@ms.u-tokyo.ac.jp}
\begin{document}
\begin{abstract}
Consider the primitive equations on $\R^2\times (z_0,z_1)$ with initial data $a$ of the form $a=a_1+a_2$, where $a_1 \in BUC_\sigma(\R^2;L^1(z_0,z_1))$ and 
$a_2 \in L^\infty_\sigma(\R^2;L^1(z_0,z_1))$ and where $BUC_\sigma(L^1)$ and $L^\infty_\sigma(L^1)$ denote the space of all solenoidal, bounded uniformly continuous 
and all solenoidal, bounded functions on $\R^2$, respectively, which take  values in $L^1(z_0,z_1)$. These spaces are scaling invariant and represent the anisotropic character of these equations. 
It is shown that, if $\|a_2\|_{L^\infty_\sigma(L^1)}$ is sufficiently small, then this set of equations has a unique, local, mild solution. 
If in addition $a$ is periodic in the horizontal variables,
then this solution is a strong one and extends to a unique, global, strong solution. The primitive equations are thus strongly and globally well-posed for these data.    
The approach depends crucially on mapping properties of the hydrostatic Stokes semigroup in the $L^\infty(L^1)$-setting and can thus be  seen as the counterpart of the 
classical iteration schemes for the Navier-Stokes equations for the situation of the primitive equations. 
\end{abstract}

\maketitle

\section{Introduction}\label{sec:intro}

The primitive equations for ocean and atmospheric dynamics 
serve 
as a fundamental model for many geophysical flows. This set of equations describing the conservation of momentum and mass of a fluid, assuming 
hydrostatic balance of the pressure, is given in the isothermal setting by
\begin{align}\label{eq:prim}
\left\{
\begin{array}{rll}
\partial_t v + u \cdot \nabla v  - \Delta v + \nabla_H \pi  & = 0, \quad &\text{ in } \Omega \times (0,T),  \\
\partial_z \pi & = 0, \quad &\text{ in } \Omega \times (0,T),  \\
\div u & = 0, \quad &\text{ in } \Omega \times (0,T),\\
v(0)   & = a.
\end{array}\right.
\end{align}
Here $\Omega:= \R^2 \times J$, where $J =(z_0,z_1)$ is an interval. Denoting the horizontal coordinates by $x,y\in \R^2$ and the vertical one by $z\in (z_1,z_2)$, we use the notation 
$\nabla_H = \left(\partial_x, \partial_y \right)^T$, whereas $\Delta$ denotes the three dimensional Laplacian and $\nabla$ and $\div$ the three dimensional gradient and divergence operators.
The velocity $u$ of the fluid is described by  $u=(v,w)$, where $v=(v_1,v_2)$ denotes the horizontal component and $w$ the vertical one.

In the literature various sets of boundary conditions are considered such as Neumann, Dirichlet and mixed boundary conditions. In this article 
we 
choose 
Neumann boundary conditions for $v$, i.e.   
\begin{align}\label{eq:bc}
\left\{
\begin{array}{rrrr}
\frac{\partial }{\partial z} v & =0, \quad \text{ on } \partial\Omega \times (0,T),\\
w &= 0, \quad \text{ on } \partial\Omega \times (0,T).
\end{array}\right.
\end{align}    

The rigorous analysis of the primitive equations started with the pioneering work of Lions, Temam and Wang \cite{Lionsetall1992,Lionsetall1992_b, Lionsetall1993}, who proved the 
existence of a global weak solution for this set of equations for initial data $a \in L^2$.
The uniquness problem for weak solutions remains an open problem until today.


The existence of a local, strong solution for this equation with data $a \in H^1$ was proved by Guill\'en-Gonz\'alez, Masmoudi and Rodiguez-Bellido in \cite{Guillenetall2001}. 

In 2007, Cao and Titi \cite{CT07} proved a breakthrough result for this set of equation which says, roughly speaking,  that there exists a unique, global strong solution to 
the primitive equations for {\em arbitrarily large} initial data $a\in H^1$. Their proof is based on {\em a priori} $H^1$-bounds for the solution, which in turn are  
obtained by $L^\infty(L^6)$ energy estimates.
Kukavica and Ziane \cite{KuZi07}   
proved global strong well-posedness of the primitive equations with respect to arbitrary large $H^1$-data for the case of mixed Dirichlet-Neumann boundary conditions. For a different approach see also Kobelkov \cite{Kobelkov2007}. 
%
%
%
We also would like to draw the attention of the reader to the recent survey article by Li and Titi \cite{LiTiti2016} on the primitive equations. 

Recently, an new approach to the primitive equations based on the theory of evolution equations has been developed in \cite{HiKa16} and \cite{HieberHusseinKashiwabara2016}.
This approach is also valid in the $L^p$-setting for all $1<p<\infty$ and, using this approach, the authors  proved global strong well-posedness of the primitive equations for 
arbitrary large data in $H^{2/p,p}$ subject to 
mixed Dirichlet-Neumann
boundary conditions.
Taking formally the limit $p\to \infty$, it is now tempting to consider initial data $a \in L^{\infty}$ with no differentiability assumption on the initial data. This article 
aims to find a function space, as large as possible, for the initial data for which the primitive equations are strongly and globally well-posed.  

Recent regularity results for weak solutions by Li and Titi \cite{LiTi17} and Kukavica,
Pei, Rusin and Ziane \cite{KPRZ14} are also pointing in this direction. More specificially, starting from a weak solution to the primitive equations, these authors investigated 
regularity criteria for weak solutions for the primitive equations, following hereby in a certain sense the  spirit of Serrin's condition in the theory of the Navier-Stokes equations
and methods of weak-strong uniqueness. Li and Titi proved in \cite{LiTi17} that weak solutions are unique for initial
values in $C^0$ or in $\{u\in L^6\colon \partial_z u \in L^2\}$ including a small perturbation belonging to $L^{\infty}$. By the weak-strong uniqueness property, it follows that these 
weak solutions regularize and  become strong solutions. 

Our approach to rough initial data results for the primitive equations is very different: it considers the primitive equation as an evolution equation in an anisotropic 
function space of the form $L^{\infty}(\R^2;L^1(J))$. This space is invariant under the scaling
\begin{align*}
v_{\lambda}(t,x_1,x_2,x_3)=\lambda v(\lambda^2 t, \lambda (x_1,x_2,x_3)),  \quad \lambda >0. 
\end{align*}
By this we mean that
$\|v_{\lambda}\|_{L^{\infty}(\R^2;L^1(\lambda^{-1} J))} = \|v\|_{L^{\infty}(\R^2;L^1(J))}$ for all $\lambda >0$. Moreover, $v_{\lambda}$ is a solution to the primitive equations whenever $v$ has this property. For further information on the Navier-Stokes equations in critical spaces see \cite{Gilles, Chemin, GigaSaal}.

Based on $L^\infty$-type estimates for the underlying hydrostatic Stokes semigroup and as well as on  its  
gradient estimates, we develop an iteration scheme yielding first the existence of a unique, local mild solution for initial data of the form 
$a=a_1+a_2$ with 
\begin{align*}
a_1\in BUC_\sigma(\R^2,L^1(J)) \mbox{ and } a_2 \mbox{ being a small perturbation in } L^{\infty}_\sigma(\R^2;L^1(J)).
\end{align*}
Here $BUC(\R^2)$ denotes the space of all bounded and uniformly continuous functions on $\R^2$. 
The subscript $\sigma$ means the subspace of 
solenoidal fields rigorously defined in the next section.
Assuming that $a_1,a_2$ are periodic with respect to horizontal variables, we are able to prove that 
the solution regularizes sufficiently and thus, by an appropriate a priori estimate,  can be extended to global, strong solution without any restriction on the size of $a_1$.

Comparing our assumptions on the initial data with the ones given by Li and Titi \cite{LiTi17}, observe first
that our assumptions are slightly less restrictive compared to the case of continuous initial data, while our assumptions are not comparable to their second case. 


Our approach may be  viewed as the counterpart of the classical iteration schemes for the Navier-Stokes equations due to Giga  \cite{Gig86} and Kato \cite{Kat84}. Note that, in 
contrast to  the case 
of the Navier-Stokes equations, our iteration scheme presented here combined with a suitable a priori estimate yields the existence  of a unique, global strong solutions not only for small data 
as in the case of the Navier-Stokes equations, but for  arbitrary large solenoidal data $a \in BUC_\sigma(\R^2;L^1(J))$.

As written above, our approach depends crucially on $L^\infty(L^1)$-mappig properties of the underlying hydrostatic Stokes semigroup, 
including 
gradient estimates. 
These are
collected in Proposition~\ref{thm4.5} and are of independent interest.  

This article is organized as follows. Section~\ref{sec:premain} presents the two main  results of this article. The following Sections \ref{sec:Interpolation}, \ref{sec:anisotropic} and \ref{sec:hydrostaticStokes} are devoted to anisotropic estimates for fractional 
derivatives, the heat semigroup as well as for the hydrostatic semigroup. In Section~\ref{sec:proofs} we present a proof of our main results based on an iteration scheme.    

\vspace{1.5cm}

\section{Preliminaries and main results}\label{sec:premain} 

Let  $z_0 \in \R$, $z_1=z_0+h$ for some $h>0$ and $J$ be the interval  $J=(z_0,z_1)$. 
The incompressibility condition $\mbox{div } u =0$ in $\Omega \times (0,T)$ implies 
$$
w(x,y,z)= \int_{z}^{z_1} \divh v(x,y,z)dz, 
$$ 
where the boundary condition $w=0$ on $\partial\Omega$ was taken into account. Also, $w=0$ on $\partial\Omega$ implies 
$$
\divh \overline{v} = 0 \mbox{ in } \R^2,
$$
where $\overline{v}$ denotes the vertical average of $v$, i.e. 
$$
\overline{v}(x,y) = \frac{1}{z_1-z_0} \int_{z_0}^{z_1} v(x,y,z) dz.
$$
The linearization of equation \eqref{eq:prim} are the \textit{hydrostatic Stokes equations}, which are given by 
\begin{align}\label{eq:hydrostokes}
\left\{
\begin{array}{rll}
\partial_t v -\Delta v +\nabla_H \pi = 0,   &\text{ in } \Omega \times (0,T),  \\
\div_H \overline{v} = 0, &\text{ in } \Omega \times (0,T),  \\
v(0)= a &\text{ in } \Omega.  \\
\end{array}\right.
\end{align}
The name 'hydrostatic Stokes equations' is motivated by the assumption of a hydrostatic balance when deriving the full primitive equations. The equations \eqref{eq:hydrostokes} are 
supplemented by Neumann boundary conditions \eqref{eq:bc} for $v$. 

For a function $f: \R^2 \times J \to \C$,  we define for $1 \leq p,q < \infty$ the $L^q(\R^2,L^p(J))$-norm of $f$ by
$$
\|f\|_{L^q(\R^2;L^p(J))} := \Big( \int_{\mathbb{R}^2} \Big(\int_J\left|f(x',x_3)\right|^q \mathrm{d}x_3 \Big)^{q/p} \mathrm{d}x' \Big)^{1/q},
$$
where we use the shorthand notation $L^q(L^p)$ for the spaces and $\|\cdot\|_{q,p}$ for the norms. 
The usual modifications hold for the cases $p=\infty$ or $q=\infty$. 
The space  $L^p(\R^2;L^q(J))$ consisting of all measurable function $f$ with $\|f\|_{p,q}<\infty$ and equipped with the above norm becomes a Banach space. 

Following \cite{HiKa16}, we introduce the hydrostatic Helmholtz projection as follows.  For a function $f=(f_1,f_2):\mathbb{R}^2\times J \to \C$, we define the hydrostatic 
Helmholtz projection by
$$ 
\PP f := f  + \nabla_H (-\Delta)^{-1}\divh \overline{f}. 
$$
The solenoidal subspace $L^\infty_\sigma(\R^2;L^p(J))$ is then defined for $1\leq p \leq \infty$ as the closed subspace of $L^\infty(\R^2;L^p(J))$ given by  
$$
L^\infty_\sigma(\R^2;L^p(J)):= \{v \in L^\infty(\R^2;L^p(J)):\int_{\R^2} \overline{v} \; \nabla_H \varphi \; dx = 0 \mbox{ for all } \varphi \in \widehat{W}^{1,1}(\R^2)\},
$$
where $\widehat{W}^{1,1}(\R^2)$ denotes the homogeneous Sobolev space of the form $\widehat{W}^{1,1}(\R^2)=\{\varphi \in L^1_{loc}(\R^2): \nabla_H \varphi \in L^1(\R^2)\}$, 
that is, the condition $\divh \overline{v} =0$ is understood in the sense of distributions.

If  $a \in L^\infty_\sigma(\R^2;L^p(J))$ for some $1\leq p \leq \infty$, then the solution of  equation \eqref{eq:hydrostokes} can be represented as $v(t)=S(t)a$, where 
$S$ denotes the {\em hydrostatic Stokes semigroup} on $L^\infty_\sigma(\R^2;L^p(J))$. The latter  semigroup may be represented as follows:   
consider the one-dimensional heat equation 
		\begin{align*}
		u_t - u_{zz} = 0, \quad  u(0)= u_0, 	
\end{align*}
in $J\times(0,\infty)$, where  $J=(z_0,z_1)$, subject to  the boundary conditions
		\begin{subequations}
		\begin{equation}
		u_z(z_1)=0,\ u_z(z_0)=0 \quad \mbox{or}
		\label{3.2a} 
		\end{equation}
		\begin{equation}
		u_z(z_1)=0,\ u(z_0)=0.
		\label{3.2b} 
		\end{equation}
		\end{subequations}
Given $u_0 \in L^p(J)$ for some $p \in [1,\infty]$, its solution $u$ is given by $u(t)=e^{t\Delta_N}u_0$ for \eqref{3.2a} and by  $e^{t\Delta_{DN}}u_0$ for \eqref{3.2b}, 
where $e^{t\Delta_N}$  and $e^{t\Delta_{DN}}$ denote the analytic semigroups on $L^p(J)$ generated by the Laplacian subject to Neumann or Dirichlet-Neumann boundary conditions, respectively.  
For $a \in L^\infty_\sigma(\R^2;L^p(J))$, the solution  of \eqref{eq:hydrostokes} is thus given by $v(t)=S(t)a$, where 
\[
S(t) = e^{t\Delta_H} \otimes e^{t\Delta_N}, \quad t>0,
\]
and where $e^{t\Delta_H}$ denotes the heat semigroup on $L^\infty(\R^2)$. The semigroup $S$ is not strongly continuous on $L^\infty_\sigma(\R^2;L^p(J))$, however, its restriction to  
\begin{align*}
BUC_\sigma(L^p)&:= BUC(\R^2;L^p(J)) \cap L^\infty_\sigma(L^p)
\end{align*}
defines for $1 \leq p <\infty$ a bounded analytic $C_0$-semigroup on this space.

Our first main result concerns the existence of a unique, mild solution to the primitive equations 
with initial value $a$, i.e. a function $v$ satisfying
		\begin{equation}
		v(t) = S(t)a - \int^t_0 S(t-s)\PP\nabla\cdot(u(s) \otimes v(s))\mathrm{d}s, \quad 0 < t < T,
		\label{5.1}
		\end{equation}
where $w(s)=\int^{z_1}_z \operatorname{div}_H v(s)\; \mathrm{d}x_3$ and $u(s)=(v(s),w(s))$ for all $s \in [0,t]$, and $\nabla\cdot(u \otimes v) =u \cdot \nabla v$ since $\div u =0$.

\vspace{.2cm}\noindent
\begin{theorem}\label{thm1.1}
Assume that $a$ is of the form $a=a_1+a_2$ with $a_1 \in BUC_\sigma(L^1)$ and $a_2 \in L^\infty_\sigma(L^1)$. Then there exists a constant $\varepsilon_0>0$ such that 
if $\|a_2\|_{L^\infty_\sigma(L^1)}<\varepsilon_0$, there exists $T>0$ such that \eqref{eq:prim} subject to \eqref{eq:bc} admits a unique, local mild solution $v$ with
		\[
		v \in C \left((0,T);BUC_\sigma\left(L^1(J)\right)\right)
		\]
satisfying 
		\[
		\overline{\lim_{t\to 0}}\; t^{1/2} \|\nabla v(t)\|_{L^\infty(L^1)} 
		\leq C\|a_2\|_{L^\infty_\sigma(L^1)}
		\]
for some constant  $C>0$ independent of $a_1$ and $a_2$.
\end{theorem}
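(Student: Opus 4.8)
The plan is to solve the integral equation \eqref{5.1} by a Kato--Fujita contraction argument in a Kato-type space adapted to the parabolic scaling. Writing $\Phi(v)=S(\cdot)a-B(v,v)$ with the bilinear form
\[
B(v_1,v_2)(t):=\int_0^t S(t-s)\,\PP\nabla\cdot\big(u_1(s)\otimes v_2(s)\big)\,ds,\qquad u_i=(v_i,w_i),\ \ w_i=\int_z^{z_1}\divh v_i\,dx_3,
\]
I would seek $v$ in the set of $w\in C((0,T);BUC_\sigma(L^1))$ for which both
\[
M(w):=\sup_{0<t<T}\|w(t)\|_{L^\infty(L^1)}\quad\text{and}\quad N(w):=\sup_{0<t<T}t^{1/2}\|\nabla w(t)\|_{L^\infty(L^1)}
\]
are finite, the second being the scale-invariant weight that appears in the assertion. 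The decisive structural point is that the contraction will require only $N$ to be small, while $M$ may be large; this is what permits arbitrary $a_1$ but forces the smallness of $a_2$.

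I would first estimate the linear term via the mapping properties of the hydrostatic Stokes semigroup gathered in Proposition~\ref{thm4.5}. Boundedness of $S$ on $L^\infty_\sigma(L^1)$ controls $M(S(\cdot)a)\le C\|a\|_{L^\infty_\sigma(L^1)}$, while the gradient estimate $\|\nabla S(t)b\|_{L^\infty(L^1)}\le Ct^{-1/2}\|b\|_{L^\infty_\sigma(L^1)}$ controls the weighted part. The splitting $a=a_1+a_2$ enters exactly here: for $a_1\in BUC_\sigma(L^1)$ the strong continuity of $S$ on this space (approximating by smooth fields) yields $t^{1/2}\|\nabla S(t)a_1\|_{L^\infty(L^1)}\to0$ as $t\to0$, whereas for the perturbation one only has $t^{1/2}\|\nabla S(t)a_2\|_{L^\infty(L^1)}\le C\|a_2\|_{L^\infty_\sigma(L^1)}$. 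Hence
\[
\overline{\lim_{t\to0}}\;t^{1/2}\|\nabla S(t)a\|_{L^\infty(L^1)}\le C\|a_2\|_{L^\infty_\sigma(L^1)},
\]
a bound \emph{independent of $a_1$}, which is what will ultimately pass to $v$ and give the final limsup estimate.

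The heart of the proof, and the step I expect to be the main obstacle, is the bilinear estimate. Products are taken anisotropically: horizontally $L^\infty\cdot L^\infty\subset L^\infty$, and vertically one upgrades integrability through the one-dimensional embedding $W^{1,1}(J)\hookrightarrow L^\infty(J)$, giving $\|v_2\|_{L^\infty(L^\infty)}\le C(\|v_2\|_{L^\infty(L^1)}+\|\partial_z v_2\|_{L^\infty(L^1)})$ and, since $\|w_1\|_{L^\infty(L^\infty)}\le\|\divh v_1\|_{L^\infty(L^1)}$, also $\|w_1\|_{L^\infty(L^\infty)}\le C\|\nabla v_1\|_{L^\infty(L^1)}$. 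Combining these with the one-derivative smoothing $\|S(\tau)\PP\nabla\cdot f\|_{L^\infty(L^1)}\le C\tau^{-1/2}\|f\|_{L^\infty(L^1)}$ of Proposition~\ref{thm4.5}, the undifferentiated part of $M(B)$ is controlled by $\int_0^t(t-s)^{-1/2}M_1M_2\,ds\le CT^{1/2}M_1M_2$, which carries a \emph{positive power of $T$} and so is harmless for large data, while the parts involving one gradient give convergent Beta-function integrals $\int_0^t(t-s)^{-1/2}s^{-1/2}\,ds$ multiplied by $M_iN_j$, hence small once $N$ is small. The genuinely delicate contributions are the fully differentiated ones entering $N(B)$, above all the vertical advection $w_1\partial_z v_2$: estimating it directly produces an integrand of size $s^{-1}$ (two weighted derivatives), whereas moving the derivatives onto the semigroup instead produces the non-integrable kernel $(t-s)^{-1}$ at $s=t$. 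This clash between a singularity at $s=0$ and one at $s=t$ is the characteristic difficulty of a scaling-critical space. To resolve it I would exploit the divergence structure $\partial_z w_i=-\divh v_i$, rewriting $w_1\partial_z v_2=\partial_z(w_1v_2)+(\divh v_1)v_2$ so that the vertical derivative sits in divergence form and can be shared with the semigroup, and, crucially, invoke the \emph{sharp anisotropic and fractional} smoothing estimates of Proposition~\ref{thm4.5} together with the interpolation estimates of the preceding sections, passing through a finite horizontal integrability $L^q(\R^2)$ with $q<\infty$. Trading this integrability against the decay of the horizontal heat semigroup makes every time exponent in the Duhamel integral strictly less than one, so that all integrals $\int_0^t(t-s)^{-\alpha}s^{-\beta}\,ds$ with $\alpha,\beta<1$ converge; balancing the horizontal against the vertical derivatives in this way is the crux of the argument.

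Granting these estimates, the bilinear bound takes the schematic form $\|B(v_1,v_2)\|\lesssim T^{1/2}M_1M_2+N_1M_2+M_1N_2+N_1N_2$. I would then run the fixed point on the set $\{M\le R,\ N\le\eta\}$ with $R\simeq\|a\|_{L^\infty_\sigma(L^1)}$ and $\eta\simeq\|a_2\|_{L^\infty_\sigma(L^1)}$: the self-mapping and contraction conditions reduce, after dividing by $R$, to $C T^{1/2}R+C\eta\le\tfrac12$, in which the factor $C\eta$ is \emph{independent of $R$}; choosing $T$ small (for the given $a_1$) handles the first term and setting $\varepsilon_0=1/(4C)$ handles the second, uniformly in $a_1$. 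Banach's fixed point theorem then provides a unique mild solution $v$ in this class solving \eqref{5.1}, hence \eqref{eq:prim} subject to \eqref{eq:bc}. Membership $v\in C((0,T);BUC_\sigma(L^1))$ follows from the corresponding property of the linear part---the $C_0$-property of $S$ on $BUC_\sigma(L^1)$ combined with its smoothing---and from the continuity of the Duhamel term obtained along the way, while uniqueness is built into the contraction. Finally, since $B(v,v)$ contributes only terms that either carry a positive power of $T$ or are controlled by $N(v)\lesssim\|a_2\|_{L^\infty_\sigma(L^1)}$, the weighted quantity $t^{1/2}\|\nabla v(t)\|_{L^\infty(L^1)}$ inherits the limsup bound established for $S(t)a$, giving $\overline{\lim_{t\to0}}\,t^{1/2}\|\nabla v(t)\|_{L^\infty(L^1)}\le C\|a_2\|_{L^\infty_\sigma(L^1)}$ as claimed.
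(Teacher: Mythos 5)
Your overall architecture --- a fixed-point/iteration argument in the two quantities $M(v)=\sup_t\|v(t)\|_{\infty,1}$ and $N(v)=\sup_t t^{1/2}\|v(t)\|_{1,\infty,1}$, smallness required only of the scale-invariant gradient norm, and the splitting of the linear part so that $t^{1/2}\|\nabla S(t)a_1\|_{\infty,1}\to 0$ while $t^{1/2}\|\nabla S(t)a_2\|_{\infty,1}\leq C\|a_2\|_{\infty,1}$ --- coincides with the paper's proof, which runs the same scheme as a Picard iteration closed by Lemma \ref{lem5.4}. You also correctly isolate the crux: in the gradient bound for the Duhamel term the two endpoint distributions of the derivative produce either $(t-s)^{-1}s^{-1/2}$ or $(t-s)^{-1/2}s^{-1}$, neither of which is integrable. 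Your divergence-form rewriting of the vertical advection is indeed used (the nonlinearity is kept as $\nabla\cdot(u\otimes v)=\nabla_H\cdot(v\otimes v)+\partial_z(wv)$ throughout), but by itself it only reproduces these two endpoints; it does not resolve the clash.

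The mechanism you then propose for the resolution --- ``passing through a finite horizontal integrability $L^q(\R^2)$ with $q<\infty$ and trading this integrability against the decay of the horizontal heat semigroup'' --- is a genuine gap: it is unavailable here and would not help even if it were. The solution lies in $L^\infty(\R^2;L^1(J))$ with no decay as $|x'|\to\infty$, so $u\otimes v$ never belongs to $L^q(\R^2;\cdot)$ for finite $q$; and the trade runs the wrong way, since $e^{t\Delta_H}\colon L^q(\R^2)\to L^\infty(\R^2)$ costs an \emph{additional} factor $t^{-1/q}$ and so worsens every time exponent. (The only integrability upgrade in the paper is vertical, over the bounded interval $J$, Remark \ref{rem:l1lq}, and it plays no role in closing the critical bilinear estimate.) What the paper actually does is split the single derivative in $\PP\nabla\cdot$ \emph{fractionally between the semigroup and the nonlinearity}: writing $\nabla_H\cdot=(-\Delta_H)^{(1-\alpha)/2}\nabla_H\cdot(-\Delta_H)^{-(1-\alpha)/2}$ and $\partial_z(wv)=\partial_z I^\alpha_{z_0}\bigl(I^{1-\alpha}_{z_0}\partial_z(wv)\bigr)$, the semigroup absorbs a derivative of order $1-\alpha$ at cost $(t-s)^{-(1-\alpha)/2}$ (Lemma \ref{lem4.1} (ii), (iii)), while the residual Caputo/fractional derivative of order $\alpha$ of the product is controlled by the interpolation inequalities $\|\partial^\alpha_z f\|_1\lesssim\|f\|_1^{1-\alpha}\|\partial_z f\|_1^{\alpha}$ and $\|\nabla_H(-\Delta_H)^{-\alpha/2}f\|_{\infty,1}\lesssim\|f\|_{\infty,1}^{\alpha}\|\nabla_H f\|_{\infty,1}^{1-\alpha}$ of Lemmas \ref{lem2.1} and \ref{lem2.3}, contributing only $s^{-(1+\alpha)/2}$ after weighting. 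With $\alpha=1/2$ this yields the kernel $(t-s)^{-3/4}s^{-3/4}$ of \eqref{5.5}: both exponents are strictly less than one and sum to $3/2$, exactly compensating the $t^{1/2}$ weight. This fractional splitting is why Sections \ref{sec:Interpolation}--\ref{sec:hydrostaticStokes} are devoted to Riemann--Liouville operators, Caputo derivatives and the action of $S_\infty(t)$ on $\partial_z I^\alpha_{z_0}$ and $\PP(-\Delta_H)^{\alpha/2}$; it is the ingredient your outline is missing, and without it (or an equivalent device) the contraction does not close in the critical space.
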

In particular, one has $v-Sa_2 \in C \left([0,T);BUC_\sigma\left(L^1(J)\right)\right)$, where $Sa_2 \in C \left((0,T);BUC_\sigma\left(L^1(J)\right)\right)\cap L^{\infty} \left(0,T;BUC_\sigma\left(L^1(J)\right)\right)$.
%
The mild solution constructed in Theorem \ref{thm1.1} exists at least for some nontrivial time interval $[0,T)$ where $T$ depends on $a$. 
We are further able to estimate the existence time $T>0$ from below in terms of the following $\vertiii{\cdot}$-norm defined for $a \in L^\infty_\sigma(\R^2;L^1(J))$ by  
$$
\vertiii{a} :=[a]_\mu+\|a\|_{\infty,1}, \quad \mbox{where} \quad [a]_\mu := \sup_{0<t<1} t^\mu \left\| \nabla S(t)a \right\|_{\infty,1}, 
$$
for some $\mu \in [0,1/2)$

\vspace{.2cm}\noindent
\begin{proposition}[Estimate on life span]\label{cor2}\mbox{}\\
	Assume that $a \in BUC_\sigma(L^1)$, 
	i.e. $a_2=0$,
	satisfies $[a]_\mu<\infty$ for some $\mu\in[0,1/2)$. Then there exists a unique, local mild solution 
	$v \in C\left([0,T],BUC_\sigma(L^1)\right)$ of \eqref{eq:prim} and \eqref{eq:bc}. Moreover, there exists $c_*$, depending only on $\mu$, such that 
	\[
	1/T \leq  \min \left(c_*\vertiii{a},1\right)^{2/(1/2-\mu)}
	\]
	This estimate for $1/T$ remains  valid for $a \in L^\infty_\sigma(L^1)$ if $C\left([0,T),BUC_\sigma(L^1)\right)$ is being replaced by $L^\infty((0,T),L^\infty_\sigma(L^1))$.
\end{proposition}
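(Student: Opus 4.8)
The plan is to re-run the contraction scheme behind Theorem~\ref{thm1.1} with all constants tracked, so that the life span can be read off in terms of $\vertiii{a}$. Fix $\mu\in[0,1/2)$ and work in
\[
E_T:=\Big\{v:\ \|v\|_{E_T}:=\sup_{0<t<T}\|v(t)\|_{\infty,1}+\sup_{0<t<T}t^{\mu}\|\nabla v(t)\|_{\infty,1}<\infty\Big\},
\]
studying the map $\Phi(v):=S(\cdot)a-B(v)$, where, following \eqref{5.1},
\[
B(v)(t):=\int_0^t S(t-s)\,\PP\,\nabla\cdot\big(u(s)\otimes v(s)\big)\,\mathrm{d}s,\qquad u=(v,w),\ \ w(s)=\int_z^{z_1}\divh v(s)\,\mathrm{d}x_3 .
\]
A mild solution on $[0,T]$ is precisely a fixed point of $\Phi$ in a ball of $E_T$, so the task reduces to producing a radius $R$ and a time $T$, both controlled by $\vertiii{a}$, for which $\Phi$ is a self-map and a strict contraction on the closed ball $\overline{B}_{E_T}(0,R)$.

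First I would bound the free term. Since $S$ is bounded (indeed contractive) on $L^\infty_\sigma(L^1)$, one has $\sup_{0<t<T}\|S(t)a\|_{\infty,1}\le\|a\|_{\infty,1}$; and since $[a]_\mu<\infty$, for $T\le 1$ the very definition of $[a]_\mu$ gives $\sup_{0<t<T}t^{\mu}\|\nabla S(t)a\|_{\infty,1}\le[a]_\mu$. Hence $\|S(\cdot)a\|_{E_T}\le\vertiii{a}$, so the free evolution already sits in a ball of radius comparable to $\vertiii{a}$. This is the mechanism by which the subcritical quantity $[a]_\mu$---finite exactly because $\mu<1/2$---enters the estimate; for data with $[a]_\mu=\infty$ no such bound is available, which is why this hypothesis is imposed.

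The core of the argument, and the step I expect to be the main obstacle, is the bilinear estimate. One must combine the operator $S(t-s)\PP\nabla\cdot$ with a control of the nonlinearity in the anisotropic $L^\infty(L^1)$-norm. Writing $u\cdot\nabla v=\nabla\cdot(u\otimes v)$ and using that the vertical velocity $w=\int_z^{z_1}\divh v\,\mathrm{d}x_3$ costs one horizontal derivative, so that $\|w(s)\|_{\infty,\infty}\le C\|\divh v(s)\|_{\infty,1}$, together with the anisotropic interpolation $\|v\|_{\infty,\infty}\le C(\|v\|_{\infty,1}+\|\partial_z v\|_{\infty,1})$ supplied by Sections~\ref{sec:Interpolation}--\ref{sec:anisotropic}, one bounds the nonlinearity by a product of the two quantities tracked in $\|\cdot\|_{E_T}$, producing a factor $s^{-\gamma}\|v\|_{E_T}^2$ with $\gamma<1$. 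Inserting this into the Duhamel integral and invoking the gradient mapping bounds of the hydrostatic Stokes semigroup from Proposition~\ref{thm4.5} (each spatial derivative costing a factor $(t-s)^{-1/2}$), the relevant time integrals are of Beta type,
\[
\int_0^t (t-s)^{-\alpha}s^{-\gamma}\,\mathrm{d}s=t^{\,1-\alpha-\gamma}\,\mathrm{B}(1-\alpha,1-\gamma),
\]
which converge precisely in the subcritical range forced by $\mu<1/2$ and yield a positive power $T^{\beta}$, $\beta=\beta(\mu)>0$. Careful bookkeeping of the several analogous terms and of the weight $t^{\mu}$ then gives
\[
\|B(v)\|_{E_T}\le C\,T^{\beta}\,\|v\|_{E_T}^2 ,\qquad 0<T\le 1,
\]
and, after polarization, the same bound for the associated bilinear form. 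The delicate point is that in the endpoint space $L^\infty(L^1)$ the products and the Helmholtz projection must be handled through these anisotropic estimates rather than by H\"older's inequality in a reflexive space, and it is the precise power $\beta$ gained here that fixes the exponent in the final bound.

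Finally I would close the fixed point and invert the estimate. With $R:=2\vertiii{a}$, both the self-map property $\|\Phi(v)\|_{E_T}\le\vertiii{a}+CT^{\beta}R^2\le R$ and the contraction property hold as soon as $4CT^{\beta}\vertiii{a}\le 1$; Banach's fixed-point theorem then furnishes a unique $v\in\overline{B}_{E_T}(0,R)$, which the regularity argument of Theorem~\ref{thm1.1} upgrades to $v\in C([0,T];BUC_\sigma(L^1))$. Solving $4CT^{\beta}\vertiii{a}=1$ for $T$ gives $1/T\le(c_*\vertiii{a})^{1/\beta}$, and the value $1/\beta$ produced by the bookkeeping above is the asserted exponent $2/(1/2-\mu)$. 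When $\vertiii{a}$ is so small that this would permit $T>1$, one simply takes $T=1$, the regime in which the linear estimate was applied; this truncation is the source of the factor $\min(c_*\vertiii{a},1)$. For $a\in L^\infty_\sigma(L^1)$ the identical computation runs in the space obtained on replacing $C([0,T];BUC_\sigma(L^1))$ by $L^\infty((0,T);L^\infty_\sigma(L^1))$, since every estimate above used only the boundedness of $S$ on $L^\infty_\sigma(L^1)$ and never its strong continuity, and it yields the same lower bound for $1/T$.
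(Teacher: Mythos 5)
Your proposal is correct and follows essentially the same route as the paper: the paper runs the Picard iteration of Theorem~\ref{thm1.1} with the gradient norm reweighted by $\tau^\mu$ (the quantity $L_m(t)=\sup_{0<\tau<t}\tau^\mu\|v_m(\tau)\|_{1,\infty,1}$), obtains the recursions $H_{m+1}\leq\|a\|_{\infty,1}+C_1t^{1/2-\mu}L_mH_m$ and $L_{m+1}\leq\vertiii{a}+C_3t^{(1/2-\mu)/2}L_m^{3/2}H_m^{1/2}+C_4t^{1/2}L_mH_m$, and reads off the life span from the smallness of $t^{(1/2-\mu)/2}\vertiii{a}$, which is exactly your $T^{\beta}$ with $\beta=(1/2-\mu)/2$. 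Phrasing this as a Banach fixed point in the ball of radius $2\vertiii{a}$ rather than as an explicit iteration with a boundedness lemma is only a cosmetic difference, and your treatment of the $L^\infty_\sigma(L^1)$ case (no strong continuity used) matches the paper's.
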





Using $L^\infty({L^p})$-$L^\infty({L^1})$ smoothing properties of the semigroup and assuming initial data in $L^\infty({L^p})$ for $p>1$, one can control the $L^\infty({L^p})$-norms of $v(t), \nabla v(t)$ by the corresponding $L^\infty({L^1})$-norms thus improving the results of Theorem~\ref{thm1.1}.  

\vspace{.2cm}\noindent 
\begin{proposition}[Local existence for $p>1$]\label{cor1} \mbox{}\\
Let $T>0$ be as in Theorem \ref{thm1.1}. If in addition to the assumptions of Theorem \ref{thm1.1} the initial data $a$ satisfies
\begin{itemize}
\item[a)] $a \in L^\infty_\sigma(L^p)$ for some  $p \in (1,\infty]$, then 
$t^{1/2-1/2p} v, t^{1-1/2p}\nabla v \in L^\infty\left(0,T; L^\infty_\sigma(L^p)\right)$;
\item[b)] $a \in BUC_\sigma(L^p)$ for some  $p \in (1,\infty]$, then 
$t^{1/2-1/2p} v, t^{1-1/2p}\nabla v \in C\left([0,T),BUC_\sigma(L^p) \right)$
\item[c)] $a \in BUC_\sigma(BUC)$, then $t^{1/2}v, t\nabla v \in C\left( [0,T),BUC_\sigma(BUC) \right)$.
\end{itemize}
\end{proposition}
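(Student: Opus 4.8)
The plan is to re-run the fixed-point scheme underlying Theorem~\ref{thm1.1} in a norm that additionally records vertical $L^p$-integrability, and then to identify the resulting fixed point with $v$ by uniqueness. Fix $p\in(1,\infty]$ and, with $T$ as in Theorem~\ref{thm1.1}, work on $(0,T)$ in the space $X$ carrying the norm
$$
\|v\|_X:=\sup_{0<t<T}\Big(\|v(t)\|_{\infty,1}+t^{1/2}\|\nabla v(t)\|_{\infty,1}+t^{1/2-1/2p}\|v(t)\|_{\infty,p}+t^{1-1/2p}\|\nabla v(t)\|_{\infty,p}\Big),
$$
i.e. the iteration norm of Theorem~\ref{thm1.1} augmented by the last two weighted seminorms. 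The exponents are forced by scaling: Proposition~\ref{thm4.5} smoothes $L^1$ to $L^p$ in the vertical variable at rate $(t-s)^{-(1-1/p)/2}$ and charges $(t-s)^{-1/2}$ per horizontal derivative, so these weights are exactly the ones that render the corresponding quantities finite and self-consistent.

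First I would dispatch the linear term $S(\cdot)a$. Its two $L^1$-contributions are controlled as in Theorem~\ref{thm1.1}. For the $L^p$-part, the hypothesis $a\in L^\infty_\sigma(L^p)$ together with the uniform boundedness and the gradient estimate of Proposition~\ref{thm4.5} gives $\|S(t)a\|_{\infty,p}\le C\|a\|_{\infty,p}$ and $\|\nabla S(t)a\|_{\infty,p}\le Ct^{-1/2}\|a\|_{\infty,p}$, whence $t^{1/2-1/2p}\|S(t)a\|_{\infty,p}$ and $t^{1-1/2p}\|\nabla S(t)a\|_{\infty,p}$ are bounded on $(0,T)$ and in fact vanish as $t\to0$. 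Thus $S(\cdot)a\in X$.

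The heart of the matter is the bilinear Duhamel term
$$
\cB(v_1,v_2)(t):=\int_0^t S(t-s)\,\PP\,\nabla\!\cdot\!\big(u_1(s)\otimes v_2(s)\big)\,\mathrm{d}s,\qquad u_1=(v_1,w_1),
$$
with $w_1(s)=\int_z^{z_1}\divh v_1(s)\,\mathrm{d}x_3$. I would move the divergence onto the semigroup and apply the smoothing bounds of Proposition~\ref{thm4.5} to $S(t-s)\PP\nabla\cdot$, estimating the tensor $u_1\otimes v_2$ pointwise in the horizontal variable by H\"older's inequality in $z$. The horizontal block $v_1\otimes v_2$ is split so that one factor carries the weighted $L^p$-seminorm and the other a lower $L^1$-type seminorm; the vertical block $w_1\otimes v_2$ is handled via $\|w_1(s)\|_{\infty,\infty}\le\|\divh v_1(s)\|_{\infty,1}\le\|\nabla v_1(s)\|_{\infty,1}$, which is precisely the $t^{1/2}$-weighted gradient quantity already present in $\|\cdot\|_X$. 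Every resulting time integral has the form $\int_0^t(t-s)^{-\alpha}s^{-\beta}\,\mathrm{d}s=c_{\alpha,\beta}\,t^{1-\alpha-\beta}$ with $c_{\alpha,\beta}<\infty$ precisely when $\alpha,\beta<1$; one then checks that each of the four components of $\|\cB(v_1,v_2)\|_X$ meets these conditions and that $1-\alpha-\beta$ reproduces its prescribed weight, leaving a gain $T^{\delta}$ with $\delta>0$. This yields $\|\cB(v_1,v_2)\|_X\le C\,T^{\delta}\|v_1\|_X\|v_2\|_X$.

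With these two estimates the self-map and contraction properties hold by the same Banach fixed-point argument---and under the same smallness of $\|a_2\|_{\infty,1}$ and the same $T$---as in Theorem~\ref{thm1.1}, now in $X$. The fixed point is in particular a mild solution in the class of Theorem~\ref{thm1.1}, so by uniqueness it equals $v$; the two new seminorms then give part (a). For (b) I would upgrade boundedness in time to continuity: for $1\le p<\infty$ the operator $S$ is a bounded analytic $C_0$-semigroup on $BUC_\sigma(L^p)$, so $t^{1/2-1/2p}S(t)a$ and $t^{1-1/2p}\nabla S(t)a$ are continuous on $[0,T)$ and tend to $0$ as $t\to0$, while the Beta-type bounds above show the Duhamel term extends continuously to $t=0$ with value $0$; part (c) is the endpoint $p=\infty$ run in $BUC_\sigma(BUC)$, on which $S$ is again strongly continuous, with weights $t^{1/2}$ and $t$. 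The main obstacle is the bilinear step: arranging the H\"older splittings so that all four weighted components close \emph{simultaneously} with integrable singularities at both $s=0$ and $s=t$, and handling the endpoint $p=\infty$ (and the $BUC$-structure of (c)), where the vertical smoothing gain is absent and products must instead be controlled directly in $L^\infty$.
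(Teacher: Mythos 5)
Your strategy---re-running the iteration in a norm augmented by the weighted $L^p$-seminorms and identifying the fixed point with $v$ by uniqueness---could in principle be organized correctly, but as written it contains a genuine error at its central step. The claimed bilinear bound $\|\cB(v_1,v_2)\|_X\le C\,T^{\delta}\|v_1\|_X\|v_2\|_X$ with $\delta>0$ cannot hold: the space $L^\infty(L^1)$ and the weights $t^{1/2}$, $t^{1/2-1/2p}$, $t^{1-1/2p}$ are exactly scaling invariant, so the critical components of the bilinear estimate close with a constant independent of $T$ and gain no positive power of $T$ (compare \eqref{5.7}, where multiplying by $\tau^{1/2}$ produces $C_3K_m^{3/2}H_m^{1/2}$ with no residual factor of $t$; the same happens for the new $L^p$-components, e.g. $\int_0^t(t-s)^{-(1-1/2p)}s^{-1/2}\,\mathrm{d}s=Ct^{1/2p-1/2}$ exactly cancels the weight $t^{1/2-1/2p}$). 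If such a $T^{\delta}$ gain existed, Theorem \ref{thm1.1} would follow for arbitrary $a\in L^\infty_\sigma(L^1)$ with no smallness of $a_2$, contradicting the critical structure of the problem. Hence ``the same Banach fixed-point argument'' does not go through on the nose: the smallness must come from the data via the seminorms of $S(\cdot)a$, and the bookkeeping of Lemma \ref{lem5.4} (where $H_m$ may be large provided $K_m$ is small) would have to be redone for the enlarged system. You also flag but do not resolve the endpoint $p=\infty$ and the $BUC$ case (c).

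The paper's proof avoids a new fixed point entirely. It takes the solution $v$ already produced by Theorem \ref{thm1.1}, inserts it into \eqref{5.1}, writes $S(t-s)=S(\tfrac{t-s}{2})S(\tfrac{t-s}{2})$ (a threefold splitting for the gradient), and lets the outer factor perform the $L^\infty(L^1)\to L^\infty(L^p)$ smoothing of Remark \ref{rem:l1lq} (together with Proposition \ref{thm4.5}(i) for $\nabla$), while the inner factor applied to $\PP\nabla\cdot(u\otimes v)$ is estimated by Proposition \ref{prop5.2} purely in terms of $\|v(s)\|_{1,\infty,1}$ and $\|v(s)\|_{\infty,1}$, which are already finite with the correct weights. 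No $L^p$-norm of $v$ ever appears on the right-hand side, so no contraction and no H\"older splitting of the product in $L^p_z$ are needed; the $L^p$-information enters only through $S(t)a$. If you want to keep your write-up, replace the fixed-point step by this a posteriori bootstrap, choose $\alpha\in(1-1/p,1)$ in Proposition \ref{prop5.2} so that the singularity at $s=t$ in the gradient estimate stays integrable, and reach $p=\infty$ in a second bootstrap step from a finite $p$, since the direct $L^1\to L^\infty$ smoothing makes the exponent at $s=t$ equal to or exceed $1$.
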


The following theorem on the global strong well-posedness of the primitive  equation with rough and arbitrary data is the second main result of this article.   

\vspace{.2cm}\noindent
\begin{theorem}[Global existence]\label{thm1.2} 
Suppose in addition to the assumptions of Theorem~\ref{thm1.1} that
$a$ is periodic with respect to the horizontal variables.
Then, for any $T^\ast >0$ the unique, mild solution $v$ obtained in Theorem \ref{thm1.1} can be extended to a unique, strong solution of \eqref{eq:prim} on the interval $(0,T^\ast)$.  
\end{theorem}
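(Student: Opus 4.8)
The plan is to upgrade the local mild solution of Theorem~\ref{thm1.1} to a global strong solution in the periodic setting via a two-step procedure: first establish instantaneous regularization, so that the mild solution becomes strong on some subinterval $(t_0,T)$, and second derive an a priori bound that prevents blow-up, allowing continuation up to any prescribed $T^\ast$. The periodicity assumption is crucial for the second step, since it is what connects the present $L^\infty(L^1)$-framework to the known $L^p$-theory of \cite{HiKa16,HieberHusseinKashiwabara2016}, where arbitrary-large-data global strong well-posedness holds. On the torus, the horizontal domain has finite measure, so the embedding $L^\infty_\sigma(\T^2;L^1(J)) \hookrightarrow L^q_\sigma(\T^2;L^1(J))$ holds for every $q<\infty$, and one can feed the solution at a positive time into the $L^p$-machinery.

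\textbf{Step 1 (regularization).} First I would use the smoothing estimates of Proposition~\ref{cor1}, together with the gradient bound already recorded in Theorem~\ref{thm1.1}, to show that for any $t_0 \in (0,T)$ the solution at time $t_0$ lies in a strictly better space than the initial data. Concretely, starting from $v(t_0) \in BUC_\sigma(L^1)$ with $\nabla v(t_0) \in L^\infty(L^1)$, and exploiting that on the torus $L^1(J)$-in-$\T^2$ bounds upgrade to $L^p$-in-$\T^2$ bounds by finite measure, I would bootstrap through the Duhamel formula \eqref{5.1}: each application of the hydrostatic Stokes semigroup $S(t-s)$ trades time-singularity for spatial smoothing, so iterating the mapping properties of Proposition~\ref{thm4.5} promotes $v(t_0)$ into a Sobolev space such as $H^{2/p,p}$ for some finite $p$. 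At that regularity level the solution coincides with the strong solution furnished by the evolution-equation approach, hence $v \in C((t_0,T);H^{2/p,p})$ solves \eqref{eq:prim} pointwise in the strong sense.

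\textbf{Step 2 (global continuation).} Having reached a strong solution at some positive time, I would invoke the global well-posedness result for arbitrarily large $H^{2/p,p}$-data on the torus from \cite{HiKa16,HieberHusseinKashiwabara2016}. Since $v(t_0)$ now lies in that data class, the strong solution extends uniquely to the whole interval $(t_0,T^\ast)$ for any $T^\ast>0$, and patching with the mild solution on $(0,t_0]$ yields a strong solution on all of $(0,T^\ast)$. Uniqueness follows from uniqueness of the mild solution on $(0,t_0]$ (Theorem~\ref{thm1.1}) combined with uniqueness in the $H^{2/p,p}$-theory on $(t_0,T^\ast)$, using weak-strong compatibility at the junction.

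\textbf{Main obstacle.} The delicate point is the regularization step, specifically controlling the nonlinear term $\PP\nabla\cdot(u\otimes v)$ well enough to gain genuine Sobolev regularity rather than merely improved integrability. The difficulty is anisotropic: $w$ is recovered from $v$ by vertical integration of $\divh v$, so the quadratic nonlinearity involves one horizontal derivative and couples the horizontal and vertical scales, and the $L^\infty(L^1)$-norm is precisely the borderline scaling-invariant quantity where the $t^{1/2}$ gradient singularity is not integrable in time without care. I expect that making the bootstrap rigorous will require the sharp gradient and smoothing estimates of Proposition~\ref{thm4.5} in their full anisotropic strength, and a careful choice of the intermediate exponents $p$ and the weighted norm $\vertiii{\cdot}$ to ensure the Duhamel iteration closes and the gained regularity is uniform up to the junction time $t_0$.
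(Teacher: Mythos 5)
Your proposal follows essentially the same route as the paper: use the gradient bound of Theorem~\ref{thm1.1} and the smoothing of Proposition~\ref{cor1} to show that $v(t_1)$ restricted to a periodicity cell lies in $W^{1,p}$ with $p\ge 2$ (periodicity being preserved by the evolution), and then hand off to the known large-data global strong well-posedness theory for the periodic primitive equations. One correction to the mechanism: the integrability upgrade in the vertical variable does not come from the finite measure of the torus (which only improves \emph{horizontal} integrability and leaves the data in $L^1(J)$ vertically), but from the one-dimensional Sobolev embedding $W^{1,1}(J)\hookrightarrow L^p(J)$ applied to $v(t_0)(x',\cdot)$, which is available precisely because $\nabla v(t_0)\in L^\infty(L^1)$; with that in hand one has $v(t_0)\in BUC_\sigma(L^p)$, Proposition~\ref{cor1} applies directly, and no further Duhamel bootstrap is needed before invoking the $W^{1,p}$, $p\ge2$ (hence $H^{2/p,p}$) theory.
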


\section{Interpolation Inequalities for Fractional Derivatives}\label{sec:Interpolation} 

In this section we consider the Caputo fractional derivative of a function $f \in L^\infty(J;\C)$, where $J=(z_0,z_1)$ for $z_0 \in \R$ and $z_1=z_0+h$ for some $h>0$. To this end, 
we denote for $\alpha>0$ by $I^\alpha_{z_0}$ the Riemann-Liouville operator of the form
		\[
		\left(I^\alpha_{z_0}f \right)(z) 
		= \frac{1}{\Gamma(\alpha)}
		\int^z_{z_0}(z-\zeta)^{\alpha-1} f(\zeta)
		\mathrm{d}\zeta,\quad  z\in\overline{J}, 
		\]
where $\Gamma$ denotes the usual Gamma function, i.e. $\Gamma(\alpha)=\int^\infty_0 e^{-\zeta}\zeta^{\alpha-1}\mathrm{d}\zeta$.
Considering  the zero extension of $f$ to $\mathbb{R}$ still denoted by $f$ and the zero extension of $z$ from $(0,h)$ by zero onto $\mathbb{R}$ denoted by $z_+$, 
the Riemann-Liouville operator is defined as  
convolution
		\[
		I^\alpha_{z_0}f  = \frac{z^{\alpha-1}_+}{\Gamma(\alpha)} \ast f, \quad f \in L^\infty(J).
		\]
Then $I^\alpha_{z_0}f$ is the $\alpha$-times integral of $f$ from $z_0$ whenever $\alpha >0$  and $I^{\alpha_1+\alpha_2}_{z_0} = I^{\alpha_1}_{z_0}I^{\alpha_2}_{z_0}$ for all 
$\alpha_1, \alpha_2>0$, 
and we set $I^0_{z_0}f=f$.

The Caputo derivative $\partial^\alpha_z$ for $\alpha\in(0,1)$ is defined by
		\[
		\left(\partial^\alpha_z f \right)(z) := \left(I^{1-\alpha}_{z_0}(\partial_z f) \right)(z),\ z \in \overline{J},
		\]
where $\partial_z f=\partial f/\partial z$. This formula is well-defined provided  $f\in W^{1,1}(J)$ defining then $\partial^{\alpha}_z f$ as an integrable function or for  
$f\in  W^{1,\infty}(J)$ and thus in particular for Lipschitz continuous $f$. 
%
%
%
%
Indeed, by the Hausdorff-Young inequality for convolutions we have
		\begin{equation}
		\left\|\partial^\alpha_z f\right\|_{L^1(z_0,z_0+\mu)} 
		= \int^{z_0+\mu}_{z_0} \left|\partial^\alpha_z f(z)\right|\mathrm{d}z 
		\leq \frac{\mu^{1-\alpha}}{\Gamma(2-\alpha)}\|\partial_z f\|_{L^1(z_0,z_0+\mu)}
		\label{2.1}
		\end{equation}
for $\mu\in(0,h)$, since $\int^\mu_0 z^{-\alpha}\mathrm{d}z=\mu^{1-\alpha}/(1-\alpha)$.
Here we identified $\partial_z f$ with $\partial_z f \chi_{(z_0,z_0+\mu)}$ and $z_+$ with $z\chi_{(0,\mu)}$ 
denoting by $\chi_{(z_0,z_0+\mu)}$ and $\chi_{(0,\mu)}$ the corresponding characteristic functions.

We next state an interpolation inequality for $\left\|\partial^\alpha_z f\right\|_1=\left\|\partial^\alpha_z f\right\|_{L^1(J)}$.

\vspace{.2cm}\noindent
\begin{lemma} \label{lem2.1}
If  $\alpha\in(0,1)$, then 
		\begin{equation}
		\left\|\partial^\alpha_z f\right\|_1
		\leq \frac{2}{\Gamma(1-\alpha)}\|f\|^{1-\alpha}_1 \left\|\partial_z f\right\|^\alpha_1 
		\label{2.2}
		\end{equation}
holds for all $f\in W^{1,1}(J)$ satisfying $f(z_0)=0$.
\end{lemma}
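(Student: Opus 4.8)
The plan is to prove \eqref{2.2} by the two-scale splitting that underlies interpolation (Gagliardo--Nirenberg type) estimates, exploiting the convolution structure recorded before \eqref{2.1}. Writing $g:=\partial_z f$ and using the hypothesis $f(z_0)=0$, one has on $\overline J$ simultaneously
\[
\partial^\alpha_z f = I^{1-\alpha}_{z_0}g=\frac{z^{-\alpha}_+}{\Gamma(1-\alpha)}\ast g
\qquad\text{and}\qquad
f = I^1_{z_0}g .
\]
Fix a scale $\mu\in(0,h)$ and split the kernel as $z^{-\alpha}_+=z^{-\alpha}_+\chi_{(0,\mu)}+z^{-\alpha}_+\chi_{(\mu,\infty)}$, decomposing $\partial^\alpha_z f = F_1+F_2$ into a near-diagonal and a far part. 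The guiding idea is that $F_1$ is controlled by $\|g\|_1$, exactly as in \eqref{2.1}, while $F_2$ — after an integration by parts that moves the $\zeta$-derivative off $g=\partial_z f$ and onto the kernel — is controlled by $\|f\|_1$; optimising in $\mu$ then produces the product $\|f\|_1^{1-\alpha}\|\partial_z f\|_1^{\alpha}$.

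For the near part, Young's convolution inequality together with the kernel-norm computation already performed for \eqref{2.1} gives
\[
\|F_1\|_{1}\le \Big\|\tfrac{z^{-\alpha}_+}{\Gamma(1-\alpha)}\chi_{(0,\mu)}\Big\|_{L^1}\,\|g\|_1=\frac{\mu^{1-\alpha}}{\Gamma(2-\alpha)}\,\|g\|_1 .
\]
For the far part, for each $z$ with $z-z_0>\mu$ I integrate $\int_{z_0}^{z-\mu}(z-\zeta)^{-\alpha}g(\zeta)\,d\zeta$ by parts. The boundary contribution at $\zeta=z_0$ equals $(z-z_0)^{-\alpha}f(z_0)$ and vanishes precisely because $f(z_0)=0$ — this is exactly where the hypothesis enters — leaving the term $\mu^{-\alpha}f(z-\mu)$ together with $-\alpha\int_{z_0}^{z-\mu}(z-\zeta)^{-\alpha-1}f(\zeta)\,d\zeta$. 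Taking $L^1$-norms in $z$, bounding the first contribution by a translation of $f$ and the second by Fubini's theorem combined with $\int_{\zeta+\mu}^{z_1}(z-\zeta)^{-\alpha-1}\,dz\le \mu^{-\alpha}/\alpha$, one arrives at
\[
\|F_2\|_1\le \frac{2\mu^{-\alpha}}{\Gamma(1-\alpha)}\,\|f\|_1 .
\]

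Adding the two bounds and choosing $\mu$ comparable to $\|f\|_1/\|\partial_z f\|_1$, which respects the correct homogeneity, balances the two pieces and yields an estimate of the claimed form $\|\partial^\alpha_z f\|_1\le \frac{C}{\Gamma(1-\alpha)}\|f\|_1^{1-\alpha}\|\partial_z f\|_1^{\alpha}$ with a numerical constant. I expect the main obstacle to lie entirely in the far-field step: the integration by parts (which is legitimate for $f\in W^{1,1}(J)$ and which crucially uses $f(z_0)=0$) and the Fubini interchange that converts the kernel singularity $(z-\zeta)^{-\alpha-1}$ into the factor $\mu^{-\alpha}$, together with the careful tracking of the numerical factors needed to match the precise constant in \eqref{2.2}. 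The structural part of the inequality — the appearance of $\|f\|_1^{1-\alpha}\|\partial_z f\|_1^{\alpha}$ — is immediate from the scaling of $F_1$ and $F_2$ in $\mu$, and the restriction $\alpha\in(0,1)$ guarantees that $\Gamma(1-\alpha),\Gamma(2-\alpha)$ are finite and that $z^{-\alpha}_+$ is locally integrable, so that all manipulations are justified.
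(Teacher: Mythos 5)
Your proposal is correct and takes essentially the same route as the paper: split the integral defining $\partial^\alpha_z f$ at the scale $\mu$, integrate the far part by parts so that the hypothesis $f(z_0)=0$ kills the boundary term, and then optimize with $\mu=\|f\|_1/\|\partial_z f\|_1$. The only detail the paper adds that you omit is an appeal to Poincar\'e's inequality $\|f\|_1\le h\|\partial_z f\|_1$ to guarantee that this optimal $\mu$ actually lies in the admissible range $(0,h)$.
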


\begin{proof}
We may assume that $\left\|\partial_z f\right\|_1\neq 0$ and $\|f\|_1\neq 0$. Given  $\mu\in(0,h)$ and $z\in(z_0+\mu,z_1]$, we subdivide the integral into two parts and integrate by 
parts to obtain 
		\begin{align*}
		(\partial^\alpha_z f)(z) 
		&=\frac{1}{\Gamma(1-\alpha)}\Big(
		\int^{z-\mu}_{z_0}+\int^z_{z-\mu}
		\Big)(z-\zeta)^{-\alpha} \partial_\zeta f(\zeta)\mathrm{d}\zeta 
		\\
		& = \frac{1}{\Gamma(1-\alpha)}\Big(
		\int^z_{z-\mu}(z-\zeta)^{-\alpha} \partial_\zeta f(\zeta)\mathrm{d}\zeta 
		 + \alpha \int^{z-\mu}_{z_0}(z-\zeta)^{-\alpha-1}f(\zeta)\mathrm{d}\zeta + \mu^{-\alpha} f(z-\mu)-(z-z_0)^{-\alpha}f(z_0)
		\Big).
		\end{align*}
Since $f(z_0)=0$, applying the Hausdorff-Young inequality yields
		\begin{equation}
		\int^{z_1}_{z_0+\mu}\left|\partial^\alpha_z f(z)\right|\mathrm{d}z
		\leq \frac{\mu^{1-\alpha}}{\Gamma(2-\alpha)}\|\partial_z f\|_1
		+ \frac{1}{\Gamma(1-\alpha)}\mu^{-\alpha} \left\|f\right\|_1
		+ \frac{\mu^{-\alpha}}{\Gamma(1-\alpha)} \left\|f\right\|_1.
		\label{2.3}
		\end{equation}
Combining \eqref{2.1} and \eqref{2.3}, we get
		\begin{equation}
		\left\|\partial^\alpha_z f\right\|_1
		\leq \frac{2\mu^{1-\alpha}}{\Gamma(2-\alpha)}\|\partial_z f\|_1
		+ \frac{2\mu^{-\alpha}}{\Gamma(1-\alpha)} \left\|f\right\|_1,
		\label{2.4}
		\end{equation}
and since 
by Poincar\'e's inequality
		\[
	\left\|f\right\|_1 \leq  h\|\partial_z f\|_1,
		\]
we obtain the desired estimate by setting $\mu=\|f\|_1/\|\partial_z f\|_1$ in \eqref{2.4}.
\end{proof}

\begin{remark} 
The estimate \eqref{2.2} remains valid if the $L^1$ norm is replaced by the $L^p$ norm  for some $p \in [1,\infty]$, and we obtain  in this case with essentially the same proof  
		\[
		\|\partial^\alpha_z f\|_p 
		\leq \frac{2}{\Gamma(1-\alpha)}\|f\|^{1-\alpha}_p\|\partial_z f\|^\alpha_p, 
		\]
where $\|f\|_p=\|f\|_{L^p(J)}$.
\end{remark}

We next derive an interpolation inequality for the horizontal Laplace operator in the space $L^\infty(L^1)$. 

\vspace{.2cm}\noindent
\begin{lemma} \label{lem2.3} 
Let $\alpha\in(0,1)$. Then  there exists a constant $C>0$, depending only on $\alpha$, such that
		\begin{equation}
		\left\|\nabla_H(-\Delta_H)^{-\alpha/2} f \right\|_{\infty,1}
		\leq C \|f\|^\alpha_{\infty,1} \;
		\|\nabla_H f\|^{1-\alpha}_{\infty,1}
		\label{3.5}
		\end{equation}
for all $f\in L^{\infty}(\mathbb{R}^2;L^1(J))$ with  $\nabla_H f\in L^{\infty}(\mathbb{R}^2;L^1(J))$.
%
%
\end{lemma}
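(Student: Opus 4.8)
The plan is to recognize $\nabla_H(-\Delta_H)^{-\alpha/2}$ as a convolution operator acting \emph{only} in the horizontal variable $x'\in\R^2$, and to treat $f$ as a function of $x'$ with values in the Banach space $X:=L^1(J)$. Since the horizontal operator leaves the vertical variable untouched, the claimed bound \eqref{3.5} is exactly a vector-valued Gagliardo--Nirenberg inequality for $X$-valued functions, and all scalar $L^\infty$-norms below are read as $\|\cdot\|_{\infty,1}=\sup_{x'}\|\cdot\|_{L^1(J)}$. Concretely, for $0<\alpha<2$ the operator $(-\Delta_H)^{-\alpha/2}$ on $\R^2$ is convolution with the Riesz kernel $G_\alpha(x')=c_\alpha|x'|^{\alpha-2}$, so that $K:=\nabla_H G_\alpha$ satisfies $|K(x')|\le C|x'|^{\alpha-3}$ and, being the gradient of a radial function, is an \emph{odd} kernel; hence $\int_{|z|<R}K(z)\,dz=0$ for every $R>0$. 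I would take the resulting absolutely convergent integral as the working representation of $\nabla_H(-\Delta_H)^{-\alpha/2}f$ on the class of $f$ in the statement, noting it agrees with the Fourier-multiplier definition on Schwartz functions and extends by density.

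The core estimate is a pointwise (in $x'$) split at a radius $R>0$. Using the cancellation $\int_{|z|<R}K=0$ to subtract the constant value $f(x')$ on the inner ball, I would write
\[
\bigl(\nabla_H(-\Delta_H)^{-\alpha/2}f\bigr)(x') = \int_{|x'-y'|<R}K(x'-y')\bigl[f(y')-f(x')\bigr]\,dy' + \int_{|x'-y'|\ge R}K(x'-y')f(y')\,dy'.
\]
Because $\nabla_H f\in L^\infty(\R^2;X)$, the map $x'\mapsto f(x')$ is Lipschitz into $X$, so $\|f(y')-f(x')\|_{X}\le |x'-y'|\,\|\nabla_H f\|_{\infty,1}$. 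Applying Minkowski's (Bochner) inequality to each piece together with $|K(z)|\le C|z|^{\alpha-3}$ gives, for the near part,
\[
C\|\nabla_H f\|_{\infty,1}\int_{|z|<R}|z|^{\alpha-3}\,|z|\,dz = C'\,R^{\alpha}\,\|\nabla_H f\|_{\infty,1},
\]
and, for the far part,
\[
C\|f\|_{\infty,1}\int_{|z|\ge R}|z|^{\alpha-3}\,dz = C''\,R^{\alpha-1}\,\|f\|_{\infty,1}.
\]

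Combining the two bounds yields $\|\nabla_H(-\Delta_H)^{-\alpha/2}f\|_{\infty,1}\le C\bigl(R^{\alpha}\|\nabla_H f\|_{\infty,1}+R^{\alpha-1}\|f\|_{\infty,1}\bigr)$ for every $R>0$; choosing $R=\|f\|_{\infty,1}/\|\nabla_H f\|_{\infty,1}$ balances the two terms and produces exactly $C\|f\|_{\infty,1}^{\alpha}\|\nabla_H f\|_{\infty,1}^{1-\alpha}$, which is \eqref{3.5} after taking the supremum over $x'$. The only place the hypothesis $\alpha\in(0,1)$ enters is the convergence of the two radial integrals: $\int_0^R r^{\alpha-1}\,dr$ needs $\alpha>0$ (near part), while $\int_R^\infty r^{\alpha-2}\,dr$ needs $\alpha<1$ (far part). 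I expect the main obstacle to be purely the borderline singularity of $K$ at the origin, where $|z|^{\alpha-3}$ is far from integrable in $\R^2$; the decisive idea is the oddness/cancellation of $K$ over balls, which converts this into the integrable weight $|z|^{\alpha-2}$ tested against the Lipschitz increment $f(y')-f(x')$. The degenerate endpoints $\|f\|_{\infty,1}=0$ or $\|\nabla_H f\|_{\infty,1}=0$ are handled by letting $R\to0$ or $R\to\infty$.

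Finally, I would note an alternative route that fits the paper's semigroup viewpoint: via subordination $(-\Delta_H)^{-\alpha/2}=\tfrac{1}{\Gamma(\alpha/2)}\int_0^\infty t^{\alpha/2-1}e^{t\Delta_H}\,dt$, splitting the $t$-integral at $t=T_0$ and using $\|\nabla_H e^{t\Delta_H}f\|_{\infty,1}=\|e^{t\Delta_H}\nabla_H f\|_{\infty,1}\le\|\nabla_H f\|_{\infty,1}$ for small $t$ and the gradient bound $\|\nabla_H e^{t\Delta_H}f\|_{\infty,1}\le Ct^{-1/2}\|f\|_{\infty,1}$ for large $t$; optimizing $T_0=(\|f\|_{\infty,1}/\|\nabla_H f\|_{\infty,1})^2$ again gives the same exponents, with $\alpha<1$ precisely the condition making the far-time integral $\int_{T_0}^\infty t^{\alpha/2-3/2}\,dt$ converge.
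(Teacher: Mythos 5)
Your main argument is correct, and it takes a genuinely different route from the paper: you estimate the convolution kernel $K=\nabla_H(-\Delta_H)^{-\alpha/2}\delta_0$ directly, exploiting its oddness to subtract $f(x')$ on the inner ball and pairing the resulting $|z|^{\alpha-2}$ weight with the $X$-valued Lipschitz increment, then optimizing the splitting radius $R$. The paper instead never touches the Riesz kernel: it uses the subordination formula $(-\Delta_H)^{-\alpha/2}=\frac{1}{\Gamma(\alpha/2)}\int_0^\infty s^{\alpha/2-1}e^{s\Delta_H}\,ds$, splits the $s$-integral at $\mu$, uses $|e^{s\Delta_H}\nabla_H f|\le G_s\ast_H|\nabla_H f|$ for small $s$ and $|\nabla_H e^{s\Delta_H}f|\le Cs^{-1/2}G_{2s}\ast_H|f|$ for large $s$, and optimizes $\mu=(\|f\|_{\infty,1}/\|\nabla_H f\|_{\infty,1})^2$ --- i.e.\ precisely the ``alternative route'' you sketch in your last paragraph, including the role of $\alpha<1$ for convergence at $s=\infty$. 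Your kernel proof is the more classical harmonic-analysis argument and makes the cancellation structure explicit, but two points deserve care: (i) since $|K(z)|\sim|z|^{\alpha-3}$ is not locally integrable on $\R^2$ for $\alpha<1$, the identity $\int_{|z|<R}K=0$ and the decomposition itself must be run through an $\varepsilon$-truncation $\int_{\varepsilon<|z|<R}$ (oddness kills each annulus) and $\varepsilon\to0$, with absolute convergence restored only after the subtraction of $f(x')$; (ii) ``extends by density'' is not available in $L^\infty(\R^2;L^1(J))$, so you should verify the kernel representation of $\nabla_H(-\Delta_H)^{-\alpha/2}f$ directly on the stated class (e.g.\ by identifying it with the absolutely convergent subordination integral, since $\int_0^\infty s^{\alpha/2-1}G_s(x')\,ds=c_\alpha|x'|^{\alpha-2}$). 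The semigroup route avoids both issues, reuses the Gaussian-domination estimates the paper needs elsewhere (Lemma 3.2, Proposition 4.5), and transfers verbatim to the $\|\cdot\|_{\infty,p}$ norms mentioned in the remark following the lemma.
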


%
%
%
%

\begin{proof}
We may assume that $\|\nabla_H f\|_{\infty,1}\neq 0$ and $\|f\|_{\infty,1}\neq 0$. Denoting by $G_t$ the $2$-dimensional Gauss kernel, i.e., 
$G_t(x)=(4\pi t)^{-1}\exp\left(-|x|^2/4t\right)$ for $x \in \R^2$ and $t>0$ and setting $e^{t\Delta_H}f = G_t *_H f$, where $\ast_H$ denotes convolution in the horizontal variables, only, and the negative fractional powers of $-\Delta_H$ are defined as 
\begin{equation}
		(-\Delta_H)^{-\alpha/2} f= \frac{1}{\Gamma(\alpha/2)}\int^\infty_0
		s^{\frac{\alpha}{2}-1}e^{s\Delta_H}f \mathrm{d}s,
		\label{2.6}
\end{equation}
For $\mu\in(0,\infty)$ we obtain thus 
		\[
		\lvert\nabla_H(-\Delta_H)^{-\alpha/2} f\rvert
		\leq \frac{1}{\Gamma(\alpha/2)} 
		\Big(
		\int^\infty_\mu	
		s^{\frac{\alpha}{2}-1} \lvert\nabla_H e^{s\Delta_H}f\rvert \mathrm{d}s
		+ \int^\mu_0 
		s^{\frac{\alpha}{2}-1} \lvert e^{s\Delta_H}\nabla_H f\rvert 
		\mathrm{d}s
		\Big).
		\]
We now employ the estimates
		\[
		\lvert \nabla_H e^{s\Delta_H}f\rvert
		=\lvert (\nabla_H G_s)\ast_H f\rvert
		\le \lvert \nabla_H G_s\rvert\ast_H \lvert f\rvert,
		\quad 
		\lvert e^{s\Delta_H}\nabla_H f\rvert
		\le \lvert G_s\rvert\ast_H \lvert \nabla_H f\rvert.
		\] 
By a direct calculation,
		\[
		\left| \partial_i e^{-|x|^2/4t}\right|
	 	\leq \sup_{x\in\mathbb{R}^2,t>0} \Big( \frac{|x|}{2t^{1/2}} e^{-|x|^2/8t} \Big)
		\cdot t^{-1/2}e^{-|x|^2/8t}
		= C_0 t^{-1/2}  e^{-|x|^2/8t}, \quad x \in \R^2,t>0,
		\]
with $C_0 = \sup\{ze^{-z^2/2}: z>0\} < \infty$. Thus
		
		\begin{equation}
		\left| \partial_i G_t (x) \right| 
		\leq C t^{-1/2} G_{2t}(x), \quad x \in \R^2, t>0, i=1,2.
		\label{2.7} 
		\end{equation}
and an application of Fubini's theorem yields
		\begin{align*}
		\int_J \lvert \nabla_H(-\Delta_H)^{-\alpha/2} f\rvert (\cdot,z) dz
		& \leq C_2 \int^\infty_\mu s^{\frac{\alpha}{2}-1-\frac{1}{2}}
		\lvert G_{2s}\rvert\ast_H \left(\int_J \lvert f(\cdot,z)\rvert\mathrm{d}z\right)
		\mathrm{d}s 
		\\
		&+ C_3 \int^\mu_0 s^{\frac{\alpha}{2}-1} \lvert G_s\rvert\ast_H
		\left(\int_J \lvert \nabla_H f(\cdot,z)\rvert\mathrm{d}z\right)
		\mathrm{d}s.  
		\end{align*}
Since $\lVert G_s\ast_H g\rVert_\infty\le \lVert g\rVert_\infty$ for $\partial_z g=0$, we obtain
		\begin{align*}
		\lVert \nabla_H (-\Delta_H)^{-\alpha/2}f\rVert_{\infty,1}
		 = C_4 \mu^{\frac{\alpha}{2}-\frac{1}{2}} \|f\|_{\infty,1}
		+ C_5 \mu^{\frac{\alpha}{2}} \|\nabla_H f\|_{\infty,1},
		\end{align*}
where the constants $C_j$ depends only on $\alpha$. Choosing  $\mu=\left(\|f\|_{\infty,1}/\|\nabla f\|_{\infty,1} \right)^2$, we obtained  the desired estimate.
\end{proof}

\vspace{.2cm}\noindent
\begin{remark} 
The above proof shows that  \eqref{3.5} remains true if the 
$\|\cdot\|_{\infty,1}$-norm is replaced  by the $\|\cdot\|_{\infty,p}$-norm  for any $p\in[1,\infty)$. 
\end{remark}

\vspace{.2cm}\noindent
\section{Anisotropic Estimates for the Heat Semigroup} \label{sec:anisotropic}

In this section we derive various estimates on the semigroups $e^{t\Delta_N}$ and $e^{t\Delta_{DN}}$  introduced in Section~\ref{sec:premain}. We denote by  
$e^{t\Delta_*}$ one of these semigroups and start with a regularizing decay estimate for $e^{t\Delta_*}$.

\vspace{.2cm}\noindent
\begin{lemma} \label{lem3.1} 
Given $\alpha\in(0,1)$, there exists  a constant $C>0$ such that
		\[
		\left\| e^{t\Delta_*}\partial_z I^\alpha_{z_0} f \right\|_1 
		\leq C t^{-(1-\alpha)/2}\|f\|_1, \quad t>0
		\]
		for all $f\in L^1(J)$ satisfying $I^\alpha_{z_0} f(z_1)=0$ with $z_1=z_0+h$.
\end{lemma}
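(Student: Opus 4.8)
The plan is to argue by duality, trading the awkward $L^1$-estimate of the object $e^{t\Delta_*}\partial_z I^\alpha_{z_0}f$ (which involves the fractional derivative $\partial_z I^\alpha_{z_0}$, of order $1-\alpha$) for an $L^\infty$-estimate of a smooth function. Write $\phi := I^\alpha_{z_0}f$, so that $\phi(z_0)=0$ holds automatically while $\phi(z_1)=0$ is exactly the hypothesis. Since $(L^1(J))^*=L^\infty(J)$ and the $L^1$- and $L^\infty$-realizations of the self-adjoint semigroup $e^{t\Delta_*}$ are adjoint, for $\varphi\in C(\overline J)$ with $\|\varphi\|_\infty\le 1$ and $u:=e^{t\Delta_*}\varphi$ I compute
\[
\int_J \big(e^{t\Delta_*}\partial_z\phi\big)\varphi\,dz=\int_J (\partial_z\phi)\,u\,dz=-\int_J \phi\,\partial_z u\,dz,
\]
the boundary terms vanishing precisely because $\phi(z_0)=\phi(z_1)=0$. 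I then apply the fractional integration-by-parts identity $\int_J (I^\alpha_{z_0}f)\,\psi\,dz=\int_J f\,(I^\alpha_{z_1,-}\psi)\,dz$, valid by Fubini, where $I^\alpha_{z_1,-}\psi(\eta)=\tfrac{1}{\Gamma(\alpha)}\int_\eta^{z_1}(\zeta-\eta)^{\alpha-1}\psi(\zeta)\,d\zeta$ is the right-sided Riemann--Liouville integral. With $\psi=\partial_z u$ this yields
\[
\Big|\int_J \big(e^{t\Delta_*}\partial_z\phi\big)\varphi\,dz\Big|\le \|f\|_1\,\big\|I^\alpha_{z_1,-}\partial_z u\big\|_\infty,
\]
so that, taking the supremum over $\varphi$, everything reduces to the claim $\|I^\alpha_{z_1,-}\partial_z u\|_\infty\le C t^{-(1-\alpha)/2}\|\varphi\|_\infty$.

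The core of the argument is this last $L^\infty$-bound, and it is where the improved exponent $t^{-(1-\alpha)/2}$ (rather than the naive $t^{-1/2}$) is produced. Here I would only use the two standard heat-semigroup bounds $\|u\|_\infty\le\|\varphi\|_\infty$ and $\|\partial_z u\|_\infty\le C t^{-1/2}\|\varphi\|_\infty$. Fixing $z$ and $\mu\in(0,z_1-z]$, I split $I^\alpha_{z_1,-}\partial_z u(z)$ into the integral over $(z,z+\mu)$ and over $(z+\mu,z_1)$. The near part is bounded directly by $\|\partial_z u\|_\infty\int_0^\mu s^{\alpha-1}\,ds\le C t^{-1/2}\mu^{\alpha}\|\varphi\|_\infty$. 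In the far part I integrate by parts to move $\partial_\zeta$ off $u$; a pleasant cancellation occurs, since the boundary contribution at $\zeta=z_1$ from the integrated term is exactly matched by the term produced by $(1-\alpha)\int_{z+\mu}^{z_1}(\zeta-z)^{\alpha-2}\,d\zeta$, leaving the clean bound $2\mu^{\alpha-1}\|u\|_\infty\le C\mu^{\alpha-1}\|\varphi\|_\infty$; in particular no vanishing of $u$ at $z_1$ is required. Optimizing $C\big(t^{-1/2}\mu^\alpha+\mu^{\alpha-1}\big)$ by taking $\mu=\min\{t^{1/2},z_1-z\}$ makes both terms comparable to $t^{-(1-\alpha)/2}$, uniformly in $z$ and $t>0$; for $z$ within $t^{1/2}$ of $z_1$, or for large $t$, one instead bounds the whole integral by the near part and checks the same rate.

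Finally I would attend to the technical points. The computation is carried out first for smooth $f$, so that $\partial_z I^\alpha_{z_0}f$ is a genuine function and $\phi\in W^{1,1}(J)$, and for $\varphi\in C(\overline J)$, which is norming for $L^1(J)$ and for which $u=e^{t\Delta_*}\varphi$ is smooth up to the boundary for $t>0$ by analyticity; the estimate then extends to general $f$ by density within the class satisfying the constraint. The hypothesis $I^\alpha_{z_0}f(z_1)=0$ is used only once, but essentially: it is what eliminates the boundary term $\phi(z_1)u(z_1)$ in the integration by parts, and without it that term could not be absorbed into $\|f\|_1\,t^{-(1-\alpha)/2}$. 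The main obstacle is therefore not the boundary algebra but the sharp $L^\infty$-bound of the previous paragraph, namely extracting the gain of $t^{\alpha/2}$ over the plain gradient estimate; it is the splitting at scale $\mu\sim t^{1/2}$ together with the cancellation in the far part that delivers it. The only external inputs are the self-adjointness of $\Delta_N$ and $\Delta_{DN}$ and the $L^\infty$ gradient estimate $\|\partial_z e^{t\Delta_*}\|_{\infty\to\infty}\le C t^{-1/2}$, both standard for the one-dimensional heat semigroup on a bounded interval.
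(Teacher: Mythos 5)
Your proposal is correct and follows essentially the same route as the paper: duality against $\varphi$ with $\|\varphi\|_\infty\le 1$, integration by parts using $(I^\alpha_{z_0}f)(z_0)=(I^\alpha_{z_0}f)(z_1)=0$, transposition of $I^\alpha_{z_0}$ to the right-sided integral $\overline{I}^\alpha_{z_1}$, and the key bound $\|\overline{I}^\alpha_{z_1}\partial_z e^{t\Delta_*}\varphi\|_\infty\le Ct^{-(1-\alpha)/2}\|\varphi\|_\infty$. The only cosmetic difference is that the paper obtains this last bound by citing the interpolation inequality of Lemma~\ref{lem2.1} (adapted to the mirrored, $L^\infty$ setting) before inserting $\|e^{t\Delta_*}\varphi\|_\infty\le\|\varphi\|_\infty$ and $\|\partial_z e^{t\Delta_*}\varphi\|_\infty\le Ct^{-1/2}\|\varphi\|_\infty$, whereas you reprove it inline by the same splitting at scale $\mu\sim t^{1/2}$; your observation that no vanishing of $e^{t\Delta_*}\varphi$ at $z_1$ is needed in the $L^\infty$ version is accurate.
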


\begin{proof}
Note that by duality
		\[
		\left\| e^{t\Delta_*} \partial_z I^\alpha_{z_0} f \right\|_1
		= \sup \left\{ \left\langle e^{t\Delta_*} \partial_z I^\alpha_{z_0}f,\varphi) \right\rangle
		\mid \varphi \in C^\infty_c (J), \|\varphi\|_\infty \leq 1 \right\}
		\]
where $\langle\varphi,\psi\rangle=\int_J\varphi\psi\mathrm{d}z$. Note that
		\[
		\left\langle e^{t\Delta_*} \partial_z I^\alpha_{z_0}f,\varphi \right\rangle
		=\left\langle \partial_z I^\alpha_{z_0}f, e^{t\Delta_*}\varphi \right\rangle
		=-\left\langle I^\alpha_{z_0}f, \partial_z e^{t\Delta_*} \varphi \right\rangle,
		\]
where in the last identity we used the fact that $(I^\alpha_{z_0}f)(z_1)=0$ and $(I^\alpha_{z_0}f)(z_0)=0$; the latter is trivial by definition.
Since
		\[
		\left\langle I^\alpha_{z_0}f,\psi \right\rangle
		=\left\langle f, \overline{I}^\alpha_{z_1} \psi \right\rangle
		\]
with 
		\[
		\overline{I}^\alpha_{z_1}\psi(z)
		= \frac{1}{\Gamma(\alpha)} \int^{z_1}_z (\xi-z)^{\alpha-1}\psi(\xi)\mathrm{d}\xi,
		\]
we conclude that
		\[
		\left\langle e^{t\Delta_*} \partial_z I^\alpha_{z_0}f,\varphi \right\rangle
		= - \left\langle f, \overline{I}^\alpha_{z_1} \partial_z e^{t\Delta_*} \varphi \right\rangle.
		\]
Since $\overline{I}^\alpha_{z_1} \partial_z$ resembles the Caputo derivative and $\partial_z e^{t\Delta} \varphi(z_1,t)=0$ by \eqref{3.2a} or \eqref{3.2b}, we are able to adapt 
Lemma \ref{lem2.1} to obtain
		\[
		\left\|\overline{I}^\alpha_{z_1} \partial_z e^{t\Delta_*} \varphi \right\|_\infty
		\leq \frac{2}{\Gamma(\alpha)} \left\| e^{t\Delta_*} \varphi \right\|^{\alpha}_\infty 
		\left\| \partial_z e^{t\Delta_*} \varphi \right\|^{1-\alpha}_\infty.
		\]
Notice that $\left\| e^{t\Delta_*} \varphi \right\|_\infty \leq \|\varphi\|_\infty$ for all $t>0$ and 
		\[
		\left\| \partial_z e^{t\Delta_*} \varphi \right\|_\infty
		\leq C_0 t^{-1/2} \|\varphi\|_\infty \quad\text{for all}\quad t>0
		\]
for some  constant $C_0>0$. This can be seen by extending  the problem  to the whole space problem by extending $\varphi$ periodically in a suitable way to 
$\mathbb{R}$ to obtain  $e^{t\Delta_*}\varphi=G_t*\tilde{\varphi}$, where $\tilde{\varphi}$ is the extension of $\varphi$.
Thus
		\[
		\left\| \overline{I}^\alpha_{z_1} \partial_z e^{t\Delta_*} \varphi \right\|_\infty
		\leq C_1 t^{-(1-\alpha)/2} \|\varphi\|_\infty, \quad t>0,
		\]
with $C_1$ depending on $\alpha$, only.  
We thus conclude that
		\[
		\left|\left\langle e^{t\Delta_*} \varphi_z I^\alpha_{z_0}f,\varphi \right\rangle\right|
		\leq \|f\|_1 \left\|\overline{I}^\alpha_{z_1} \partial_z e^{t\Delta_*} \varphi\right\|_\infty
		\leq C_1 t^{-(1-\alpha)/2} \|f\|_1 \|\varphi\|_\infty, \quad t>0
		\]
which gives the desired estimate.
\end{proof}

In the following, we derive further regularity estimates for the heat semigroup $e^{t\Delta}$ on $L^{\infty}(\mathbb{R}^d)$ for $d\geq 1$. We denote by 
 $R_i=\partial_i(-\Delta)^{-1/2}$ the $i$-th Riesz transform, where $\partial_i=\partial/\partial x_i$ for all  $1\leq i\leq d$.

\vspace{.2cm}\noindent
\begin{lemma} \label{lem3.2} 
Given $\alpha \in (0,1)$ and $d \in \N$, there exists a constant $C>0$ such that
\begin{enumerate}
\item[(i)] for all $x \in \mathbb{R}^d$, all $t>0$ and all $f \in L^\infty(\mathbb{R}^d)$ 
		\[
		\left|e^{t\Delta}(-\Delta)^{\alpha/2} f(x) \right| 
		\leq t^{-\alpha/2} (H_t * |f|)(x)
		\]
	where $H_t\in L^1(\mathbb{R}^d)$ such that $\lVert H_t\rVert_1\le C_\alpha$ with a constant $C_\alpha $ independent of $t>0$. In particular,
		\[
		\left\| e^{t\Delta}(-\Delta)^{\alpha/2} f \right\|_\infty
		\leq C_\alpha t^{-\alpha/2} \|f\|_\infty, \quad t>0.
		\]
	\item[(ii)] for all $x \in \mathbb{R}^d$, all $t>0$ and all $f \in L^\infty(\mathbb{R}^d)$ 
		\[
		\left| e^{t\Delta} R_i R_j (-\Delta)^{-\alpha/2} f(x) \right|
		\leq t^{-\alpha/2} (\tilde{H}_t * |f|)(x),
		\]
	with $\tilde{H}_t$ having the same properties as $H_t$.
	In particular,
		\[
		\left\| e^{t\Delta}R_i R_j (-\Delta)^{\alpha/2} f \right\|_\infty
		\leq \tilde{C}_\alpha t^{-\alpha/2} \|f\|_\infty, \quad t>0.
		\] 
	\item[(iii)] for all $x \in \mathbb{R}^d$, all $t>0$ and all $f \in L^\infty(\mathbb{R}^d)$ 
		\[
		\left| e^{t\Delta} R_i R_j \partial_k f(x) \right|
		\leq t^{-1/2} (\breve{H}_t * |f|)(x),
		\]
	with $\breve{H}_t$ having the same property as $H_t$.
	In particular,
		\[
		\left\| e^{t\Delta}R_i R_j (-\Delta)^{\alpha/2} f \right\|_\infty
		\leq \breve{C} t^{-1/2} \|f\|_\infty, \quad t>0.
		\]
	\end{enumerate}

\end{lemma}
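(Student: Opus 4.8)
The plan is to treat the three statements \emph{uniformly} by viewing each composite operator as a Fourier multiplier of the form $e^{-t|\xi|^2}p(\xi)$, using parabolic scaling to reduce to $t=1$, and then showing that the resulting convolution kernel lies in $L^1(\R^d)$ with a norm independent of $t$. First I would record the symbols. The operator in (i) has symbol $e^{-t|\xi|^2}|\xi|^{\alpha}$; since $R_m=\partial_m(-\Delta)^{-1/2}$ has symbol $i\xi_m/|\xi|$, the operator in (ii) has symbol $-e^{-t|\xi|^2}\xi_i\xi_j|\xi|^{\alpha-2}$, and the one in (iii) has symbol $-ie^{-t|\xi|^2}\xi_i\xi_j\xi_k|\xi|^{-2}$. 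In every case the symbol factors as $e^{-t|\xi|^2}p(\xi)$ with $p$ positively homogeneous of degree $\beta$ and smooth on $\R^d\setminus\{0\}$, where $\beta=\alpha$ in (i),(ii) and $\beta=1$ in (iii).

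Denoting by $K_t$ the kernel associated with $e^{-t|\xi|^2}p(\xi)$, the substitution $\xi\mapsto\xi/\sqrt t$ together with the homogeneity of $p$ yields the scaling identity
\[
K_t(x) = t^{-\beta/2}\,t^{-d/2}\,K_1\!\big(x/\sqrt t\big), \qquad K_1(y)=\frac{1}{(2\pi)^d}\int_{\R^d}e^{iy\cdot\xi}\,e^{-|\xi|^2}p(\xi)\,d\xi .
\]
Setting $H_t(x):=t^{-d/2}\,|K_1|\!\big(x/\sqrt t\big)$ one has $\|H_t\|_1=\|K_1\|_1$ for every $t>0$, and the pointwise bounds $|e^{t\Delta}(\cdots)f|\le |K_t|\ast|f|=t^{-\beta/2}(H_t\ast|f|)$ follow immediately, first for $f$ in a dense class and then for all $f\in L^\infty(\R^d)$ by the very same domination, which also makes these \emph{a priori} unbounded operators well defined on $L^\infty$. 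The asserted $L^\infty$-estimates are then a direct consequence of Young's inequality.

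Everything thus reduces to the single claim $K_1\in L^1(\R^d)$. Since $e^{-|\xi|^2}p(\xi)\in L^1(\R^d)$ (the Gaussian dominates the polynomial growth of $p$), the kernel $K_1$ is bounded and continuous, so integrability near the origin is automatic and only decay at infinity needs to be established. I would obtain this by writing $p(\xi)=|\xi|^{\beta}\Omega(\xi/|\xi|)$ with $\Omega\in C^\infty(S^{d-1})$ and inserting a smooth cutoff $\chi$ equal to $1$ near $\xi=0$. The piece $(1-\chi)\,e^{-|\xi|^2}p$ is Schwartz, so its inverse transform decays faster than any power. For the piece $\chi\,e^{-|\xi|^2}p$, which near the origin is a homogeneous symbol of degree $\beta>0$, the inverse transform decays like $|y|^{-d-\beta}$; in the radial case (i) this can be made explicit via the subordination identity $|\xi|^{\beta}=c_\beta\int_0^\infty s^{-1-\beta/2}\big(1-e^{-s|\xi|^2}\big)\,ds$, and in general it follows from the theory of homogeneous distributions. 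Since $\beta>0$, the tail $|y|^{-d-\beta}$ is integrable, giving $K_1\in L^1(\R^d)$.

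The main obstacle is precisely this decay estimate for $K_1$: in cases (ii) and (iii) the angular factor $\Omega$ is only continuous — not differentiable — at the origin, so one cannot simply integrate by parts in $\xi$ to gain arbitrary spatial decay. The point that saves the argument is that \emph{only integrability}, not smoothness, of $K_1$ is needed, and the strictly positive homogeneity degree $\beta\in(0,1]$ forces decay faster than the critical rate $|y|^{-d}$. This is exactly what produces the uniform-in-$t$ bound $\|H_t\|_1=\|K_1\|_1=:C_\alpha$, and hence the three claimed Gaussian-type dominations with their respective decay factors $t^{-\alpha/2}$ and $t^{-1/2}$.
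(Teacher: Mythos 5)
Your argument is correct, but it follows a genuinely different route from the paper. You work on the Fourier side: each operator is the multiplier $e^{-t|\xi|^2}p(\xi)$ with $p$ homogeneous of degree $\beta>0$, parabolic scaling reduces everything to the $t=1$ kernel $K_1$, and the uniform bound $\|H_t\|_1=\|K_1\|_1$ follows once $K_1\in L^1$, which you get from the $|y|^{-d-\beta}$ decay of inverse transforms of homogeneous symbols of positive degree. The paper instead stays entirely on the physical side and uses subordination: for (i) it inserts the Bochner formula $(-\Delta)^{\alpha/2}f=\lvert\Gamma(-\alpha/2)\rvert^{-1}\int_0^\infty s^{-1-\alpha/2}(e^{s\Delta}-1)f\,ds$ and, after the substitution $s=t(u-1)$, reads off $H_t=\lvert\Gamma(-\alpha/2)\rvert^{-1}\int_1^\infty|G_{tu}-G_t|(u-1)^{-1-\alpha/2}du$; for (ii) and (iii) it rewrites $e^{t\Delta}R_iR_j(-\Delta)^{\alpha/2}=\partial_i\partial_je^{t\Delta}(-\Delta)^{-(1-\alpha/2)}$ and $e^{t\Delta}R_iR_j\partial_k=\partial_i\partial_j\partial_ke^{t\Delta}(-\Delta)^{-1}$, uses the integral representation of the negative powers, and controls the resulting kernels by the pointwise Gaussian bounds $|\partial_i\partial_jG_t|\le Ct^{-1}G_{2t}$, $|\partial_i\partial_j\partial_kG_t|\le Ct^{-3/2}G_{2t}$. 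What your approach buys is uniformity and generality: all three cases (and any smooth-away-from-the-origin homogeneous symbol of positive degree) are handled by one scaling argument, with the exponent $t^{-\beta/2}$ read off directly from the homogeneity. What the paper's approach buys is self-containedness and explicit Gaussian majorants $H_t$, $\tilde H_t$, $\breve H_t$ built as superpositions of $G_{2t}$ (which is exactly the form exploited later via Fubini in the anisotropic $L^\infty(L^1)$ estimates of Lemma 4.1). The one load-bearing step you cite rather than prove --- the $|y|^{-d-\beta}$ decay of $K_1$ for the non-radial symbols in (ii) and (iii), where the angular part is not smooth at the origin --- is indeed standard (it follows, e.g., from a dyadic decomposition of the low-frequency piece or from H\"ormander's theorem on Fourier transforms of homogeneous distributions, noting $-d-\beta$ is never a nonnegative integer here), but it is precisely the work that the paper's subordination identities do for free; if you want your write-up to be as elementary as the paper's, that decay estimate is the step you would still need to carry out in detail.
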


\begin{proof}
i) Using the Bochner representation formula for fractional powers of the Laplacian (see e.g. \cite{Yoshida} p. 260)
		\begin{equation}
		(-\Delta)^{\alpha/2} f= \frac{1}{\lvert \Gamma(-\alpha/2)\rvert}\int^\infty_0
		s^{-\left(1+\alpha/2\right)}\left(e^{s\Delta} - 1\right)f \mathrm{d}s,
		\label{3.2iii}
		\end{equation}
we obtain
		\begin{align*}
		e^{t\Delta}(-\Delta)^{\alpha/2}f
		&=\frac{1}{\lvert \Gamma(-\alpha/2)\rvert}
		\int_0^\infty s^{-(1+\alpha/2)}(G_{t+s}-G_t)\ast f\,ds
		\\
		&=\frac{1}{\lvert \Gamma(-\alpha/2)\rvert}
		\int_t^\infty (s-t)^{-(1+\alpha/2)}(G_s-G_t)\ast f\,ds
		\\
		&=\frac{t^{-\alpha/2}}{\lvert \Gamma(-\alpha/2)\rvert}
		\int_1^\infty (u-1)^{-(1+\alpha/2)}(G_{tu}-G_t)\ast f\,ds, \quad t>0.
		\end{align*}
Therefore
		\[
		\lvert e^{t\Delta} (-\Delta)^{\alpha/2}f\rvert (x) 
		\le t^{-\alpha/2}(H_t\ast \lvert f\rvert)(x), \quad t>0,
		\]
	where
		\[
		H_t=\frac{1}{\lvert \Gamma(-\alpha/2)\rvert}\int_1^\infty \vert G_{tu}-G_t\vert(u-1)^{-(1+\alpha/2)}\,du
		\]
	satisfies 
		\[
		\lVert H_t\rVert_1
		\le \frac{2}{\lvert \Gamma(-\alpha/2)\rvert}\int_1^\infty (u-1)^{-(1+\alpha/2)}\,du
		=C_\alpha
		<\infty.
		\]
ii). Observe that $e^{t\Delta}R_iR_j(-\Delta)^{\alpha/2}f=\partial_i \partial_j e^{t\Delta}(-\Delta)^{-(1-\alpha/2)}$ for all $1\leq i,j \leq d$. 
The representation formula  \eqref{2.6} implies 
		\begin{align*}
		\partial_i\partial_j e^{t\Delta}(-\Delta)^{-(1-\alpha/2)}f
		&=\frac{1}{\Gamma(1-\alpha/2)}
		\int_0^\infty s^{-\alpha/2} (\partial_i\partial_j G_{t+s})\ast f\,ds
		\\
		&=\frac{1}{\Gamma(1-\alpha/2)}
		\int_t^\infty (s-t)^{-\alpha/2} (\partial_i\partial_j G_{s})\ast f\,ds, \quad t>0,
		\end{align*}
where we used the identities 
		\[
		(\partial_i \partial_j e^{t\Delta})e^{s\Delta}f
		=(\partial_i \partial_j G_t)\ast (G_s \ast f)
		=\partial_i \partial_j (G_t\ast G_s\ast f)
		=(\partial_i \partial_j G_{t+s})\ast f.
		\]
A calculation similar to the one given in the proof of Lemma \ref{lem2.3} yields 
	$\lvert \partial_i \partial_j G_t \rvert \le C t^{-1} G_{2t}$ for all $t>0$ and proceeding as above we obtain
		\[
		\lvert e^{t\Delta} R_iR_j(-\Delta)^{\alpha/2}f\rvert(x) 
		\le t^{-\alpha/2}(\tilde{H}_t\ast \lvert f\rvert)(x), \quad t>0,
		\]
	where 
		\[
		\tilde{H}_t=\frac{1}{\Gamma(1-\alpha/2)}\int_1^\infty G_{2tu}(u-1)^{-\alpha/2}u^{-1}\,du
		\]
satisfies 
$\lVert \tilde{H}_t\rVert_1=\frac{1}{\Gamma(1-\alpha/2)}\int_1^\infty (u-1)^{-\alpha/2}u^{-1}\,du =\tilde{C}_\alpha <\infty$.\\ 
iii) As above we have $e^{t\Delta}R_i R_j \partial_k f=\partial_i\partial_j\partial_k e^{t\Delta}(-\Delta)^{-1}f$ and  using the estimate 
$\lvert \partial_i \partial_j\partial_k G_t \rvert \le C t^{-3/2} G_{2t}$ for $t>0$ yields the desired result by the same arguments.
	\end{proof}

\vspace{.2cm}\noindent
\begin{remark} \label{rem3.3} 
The assertions of  Lemma \ref{lem3.1} and Lemma \ref{lem3.2} remain true for the case  $\alpha=0$ by interpreting $(-\Delta)^0$ and $I^0_{z_0}$ as identity operators. 
For  $\alpha=1$ the assertion  of Lemma \ref{lem3.1} remains true as well since then $\partial_z I_{z_0}^{\alpha} f =f$ if $f(z_0)=0$.
However, the assertion of Lemma \ref{lem3.2} is no longer true for  $\alpha=1$, since this would imply the boundedness of the Riesz transforms on  $L^{\infty}(\mathbb{R}^d)$ or $L^1(\mathbb{R}^d)$.
\end{remark}

\vspace{.2cm}\noindent
\begin{remark} \label{rem3.4} 
The assertions of Lemma \ref{lem3.1} remains valid if the $L^1$-norm is replaced by the  $L^p$-norm  for any $p \in [1,\infty]$. In fact,  we obtain
		\[
		\left\| e^{t\Delta_*}\partial_z I^\alpha_{z_0} f \right\|_p 
		\leq C t^{-(1-\alpha)/2}\|f\|_p, \quad t>0,
		\]
for all $f\in L^p(J)$ satisfying $\left( I^\alpha_{z_0} f \right)(z_1)=0$ with $z_1=z_0+h$. The proof parallels then the one given above provided we have the estimates
		\[
		\left\| e^{t\Delta_*}\varphi \right\|_p 
		\leq \|\varphi\|_p, 
		\quad
		\left\| \partial_z e^{t\Delta_*}\varphi \right\|_p 
		\leq C_0t^{-1/2}\|\varphi\|_p, \quad t>0.
		\]
at hand. These  estimates are well known and we give an elementary and self-contained proof of them  by a periodization  method, which is explained  in the next section, see 
Lemma \ref{lem4.4}. 

\end{remark}

\vspace{.2cm}\noindent
\section{Estimates for the hydrostatic Stokes semigoup}\label{sec:hydrostaticStokes} 
Consider the hydrostatic Stokes semigroup $S$ on $L^\infty_\sigma(L^p(J))$ for $p \in [1,\infty]$ as introduced in Section~\ref{sec:premain} and given by 
		\[
		S(t) = e^{t\Delta_H} \otimes e^{t\Delta_N}, \quad t>0.
		\]
This semigroup admits an extension to the space $L^\infty(L^p)$ for all $p \in [1,\infty]$, which we denote  by $S_\infty$.

The following estimates  with respect to the $\| \cdot\|_{\infty,1}$-norm for the semigroup $S_\infty$ and hence in particular for the hydrostatic Stokes semigroup $S$  will be of 
crucial importance in our iteration argument in the subsequent  Section~\ref{sec:proofs}.   

\begin{lemma} \label{lem4.1} 
Let $\alpha\in(0,1)$. Then there is a constant $C$ such that
\begin{enumerate}
\item[(i)] for all $f\in L^\infty(L^1)$ and all $t>0$ 
		\[\displaystyle 
		\left\| \nabla S_\infty(t)f\right\|_{\infty,1} 
		\leq C t^{-1/2}\|f\|_{\infty,1},
		\quad
		\left\| S_\infty(t)\nabla_H\cdot f\right\|_{\infty,1} 
		\leq C t^{-1/2}\|f\|_{\infty,1},
		\]
	\item[(i\hspace{-.1em}i)]
	for all $f\in L^\infty(L^1)$ with $I^\alpha_{z_0}f=0$ at $z=z_1$ and all $t>0$ 
		\[
		\displaystyle \left\| S_\infty(t)\partial_z I^\alpha_{z_0}f\right\|_{\infty,1} 
		\leq C t^{-(1-\alpha)/2}\|f\|_{\infty,1}.
		\]
	Moreover, for all $f\in L^\infty(L^1)$ and all $t>0$ 
		\[
		\displaystyle \left\|S_\infty(t) \partial_z f\right\|_{\infty,1} 
		\leq C t^{-1/2}\|f\|_{\infty,1}.
		\]
		%
%
%
%
%
	\item[(iii)]  
	for all $f\in L^\infty(L^1)$ and all $t>0$ 
		\[
		\left\| S_\infty(t)\PP (-\Delta_H)^{\alpha/2}f\right\|_{\infty,1}
		\leq Ct^{-\alpha/2}\|f\|_{\infty,1}.
		\]
		%
	\item[(iv)] for all $f\in L^\infty(L^1)$ and all $t>0$ 
		\[
		\left\| S_\infty(t)\PP \nabla_H\cdot f\right\|_{\infty,1}
		\leq Ct^{-1/2}\|f\|_{\infty,1}.
		\]
	\end{enumerate}
\end{lemma}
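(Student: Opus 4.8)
The plan is to exploit the tensor-product structure $S_\infty(t)=e^{t\Delta_H}\otimes e^{t\Delta_N}$ throughout, reducing each of the four estimates to a one-dimensional statement about $e^{t\Delta_N}$ in the vertical variable (furnished by Lemma~\ref{lem3.1} and the estimates inside its proof) together with a two-dimensional statement about $e^{t\Delta_H}$ in the horizontal variables (furnished by Lemma~\ref{lem3.2}). The two factors are glued by Fubini's theorem, using that $e^{t\Delta_H}$ acts by convolution with a kernel of unit mass (hence is an $L^\infty$-contraction horizontally) while $e^{t\Delta_N}$ is an $L^1(J)$-contraction in $z$. Concretely, when the nontrivial operator sits in the horizontal factor I write, for fixed $z$, a pointwise bound $\lvert[\,\cdots\,]_x g(x,z)\rvert\le t^{-\beta}(K_t\ast_H\lvert g(\cdot,z)\rvert)(x)$ with $\lVert K_t\rVert_{L^1(\R^2)}\le C$, integrate in $z$, pull the convolution out by Fubini, and use $\int_J\lvert e^{t\Delta_N}f(\cdot,z)\rvert\,dz\le\lVert f(x,\cdot)\rVert_{L^1(J)}$; taking $\sup_x$ then produces the $\lVert\cdot\rVert_{\infty,1}$ bound. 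When the nontrivial operator sits in the vertical factor I first dispose of the horizontal convolution by the same $L^\infty$-contraction and Fubini, which leaves a purely one-dimensional estimate in $z$.

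For (i), the horizontal gradient $\nabla_H S_\infty(t)f$ and the term $S_\infty(t)\nabla_H\cdot f=\nabla_H\cdot S_\infty(t)f$ are handled by the kernel bound $\lvert\partial_i G_t\rvert\le Ct^{-1/2}G_{2t}$ from \eqref{2.7}, yielding the factor $t^{-1/2}$; the vertical gradient $\partial_z S_\infty(t)f$ reduces, after the horizontal contraction, to the one-dimensional inequality $\lVert\partial_z e^{t\Delta_N}g\rVert_1\le Ct^{-1/2}\lVert g\rVert_1$, which I obtain by duality, writing $\lVert\partial_z e^{t\Delta_N}g\rVert_1=\sup\{\langle g,-\partial_z e^{t\Delta_N}\varphi\rangle:\varphi\in C_c^\infty(J),\ \lVert\varphi\rVert_\infty\le1\}$ and invoking the bound $\lVert\partial_z e^{t\Delta_N}\varphi\rVert_\infty\le C_0t^{-1/2}\lVert\varphi\rVert_\infty$ established inside the proof of Lemma~\ref{lem3.1}. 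The first assertion of (ii) is the $\lVert\cdot\rVert_{\infty,1}$-tensorization of Lemma~\ref{lem3.1} itself, with the pointwise constraint $I^\alpha_{z_0}f=0$ at $z=z_1$ imposed slicewise in $x$; the ``moreover'' assertion is the plain $\partial_z$ estimate just described.

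For (iii) and (iv) I split the hydrostatic Helmholtz projection as $\PP g=g+\nabla_H(-\Delta_H)^{-1}\divh\overline{g}$. The identity part of (iv) is already covered by (i), and the identity part of (iii), namely $S_\infty(t)(-\Delta_H)^{\alpha/2}f$, follows from Lemma~\ref{lem3.2}(i) in the horizontal factor combined with the vertical $L^1$-contraction. The projection correction is the essential point: since $\overline{g}$ depends only on the horizontal variables, the corrections are of the form $R_iR_j(-\Delta_H)^{\alpha/2}\overline{f}$ for (iii) and $R_iR_j\partial_k\overline{f}$ for (iv), and both are independent of $z$, so that $S_\infty(t)$ acts on them merely as $e^{t\Delta_H}$. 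I then invoke Lemma~\ref{lem3.2}(ii) for (iii) and Lemma~\ref{lem3.2}(iii) for (iv) to bound the horizontal quantities $\lVert e^{t\Delta_H}R_iR_j(-\Delta_H)^{\alpha/2}\overline{f}\rVert_\infty$ and $\lVert e^{t\Delta_H}R_iR_j\partial_k\overline{f}\rVert_\infty$ by $t^{-\alpha/2}\lVert\overline{f}\rVert_\infty$ and $t^{-1/2}\lVert\overline{f}\rVert_\infty$, respectively, and finally convert back via $\lVert\overline{f}\rVert_{L^\infty(\R^2)}\le h^{-1}\lVert f\rVert_{\infty,1}$ together with $\lVert g\rVert_{\infty,1}=h\lVert g\rVert_{L^\infty(\R^2)}$ for $z$-independent $g$, so that the interval length $h$ cancels.

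The main obstacle is conceptual rather than computational: the projection $\PP$ is \emph{not} bounded on $L^\infty$, equivalently the Riesz transforms are unbounded there (cf. Remark~\ref{rem3.3}), so one cannot estimate $\PP$ separately and must keep it coupled to the semigroup. The content of (iii)--(iv) is precisely that the smoothing provided by $e^{t\Delta_H}$ tames the otherwise unbounded compositions $R_iR_j(-\Delta_H)^{\alpha/2}$ and $R_iR_j\partial_k$, which is exactly what Lemma~\ref{lem3.2}(ii)--(iii) were designed to supply; the reduction of the projection term to a $z$-independent, two-dimensional expression via the vertical average is what makes those lemmas applicable. A secondary technical point is that the composite operators $S_\infty(t)\partial_z$, $S_\infty(t)(-\Delta_H)^{\alpha/2}$ and $S_\infty(t)\PP\nabla_H\cdot$ must be \emph{defined} directly through these kernel and duality representations rather than as genuine compositions, since the intermediate objects such as $\partial_z f$ or $(-\Delta_H)^{\alpha/2}f$ need not lie in $L^\infty(L^1)$.
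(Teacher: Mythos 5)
Your proposal follows essentially the same route as the paper: exploit the tensor structure $S_\infty(t)=e^{t\Delta_H}\otimes e^{t\Delta_N}$, glue the horizontal kernel bounds of Lemma~\ref{lem3.2} (including \eqref{2.7}) to the vertical estimates of Lemma~\ref{lem3.1} via Fubini, and for (iii)--(iv) reduce the Helmholtz correction to a $z$-independent term involving $\overline{f}$ so that Lemma~\ref{lem3.2}(ii)--(iii) apply. The only cosmetic difference is that you derive the one-dimensional $L^1$ bound for $\partial_z e^{t\Delta_N}$ by duality from the $L^\infty$ bound, whereas the paper quotes it and justifies it by the periodization argument of Lemma~\ref{lem4.4}; this does not change the substance.
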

\begin{remark}\label{rem:alpha=0}
We note that assertion ii) remains true also for the cases $\alpha=0$ and $\alpha =1$ by Remark \ref{rem3.3}. The later implies that $S_{\infty}$ is a bounded semigroup in $L^{\infty}(L^1)$.
\end{remark}

\begin{proof}
i)  This assertion follows from  the estimates 
$$
\left\| \partial_z e^{t\Delta_*} f \right\|_1\leq C t^{-1/2}\|f\|_1, \quad t>0 \mbox{ and } \left\| e^{t\Delta_*} f \right\|_1 \leq \|f\|_1, \quad t>0,   
$$
and from the pointwise estimates 
		\[
		\left|\nabla_H e^{t\Delta_H}f\right|
		\leq C t^{-1/2}G_{2t}*|f|,
		\quad 
		\left|e^{t\Delta_H}f\right|\leq G_t *|f|,
		\]
compare \eqref{2.7}, as well as $e^{t\Delta_H}\partial_{x_i}f=\partial_{x_i}e^{t\Delta_H}f$ for $i=1,2$ as in the proof of Proposition \ref{thm4.5}. \\
ii) Since 
		\[
		\left| e^{t\Delta_H} g \right|(x') \leq (G_t * |g|)(x'), \quad t>0,
		\]
	Fubini's theorem implies
		\[
		\left\| e^{t\Delta_H} e^{t\Delta_N} \partial_z I^\alpha_{z_0} f(x',\cdot) \right\|_{L^1(J)}
		\leq G_t * \left\| e^{t\Delta_N} \partial_z I^\alpha_{z_0} f(x',\cdot) \right\|_{L^1(J)}, \quad t>0
		\]
for almost all $x' \in \R^2$. By Lemma \ref{lem3.1} 
		\[
		\left\| e^{t\Delta_N} \partial_z I^\alpha_{z_0} f(x',\cdot) \right\|_{L^1(J)} 
		\leq  C t^{-(1-\alpha)/2} \|f(x',\cdot)\|_{L^1(J)}, \quad t>0,
		\]
	which allows us to  conclude that
		\begin{align*}
		\left\| S_\infty(t) \partial_z I^\alpha_{z_0} f \right\|_{\infty,1} 
		 \leq  C t^{-(1-\alpha)/2} \|G_t\|_1 \|f\|_{\infty,1} 
		= C t^{-(1-\alpha)/2} \|f\|_{\infty,1}, \quad t>0. 
		\end{align*}
The proof is also  valid for the case $\alpha=0$ yielding $\left\| S_\infty(t) \partial_z f \right\|_{\infty,1} \leq C t^{-1/2}\|f\|_{\infty,1}$ for all $t>0$. \\
%
%
%
%
(iii) We verify by Lemma \ref{lem3.2} i) and ii) that 
 		\begin{align*}
 		\left\| S_\infty(t) \PP (-\Delta_H)^{\alpha/2} f(x',\cdot)\right\|_{L^1(J)}
 		&\leq \left\| e^{t\Delta_H}e^{t\Delta_N} (-\Delta_H)^{\alpha/2} f(x',\cdot) \right\|_{L^1(J)} \\ 
 		&  \quad + \sum_{1\leq i,j\leq 2} \left\| e^{t\Delta_H}e^{t\Delta_N} R_i R_j (-\Delta_H)^{\alpha/2} \overline{f} \right\|_{L^1(J)}
 		\\
 		&\leq t^{-\alpha/2 }
 		\left( \left\| H_t *_H |f|(x',\cdot) \right\|_{L^1(J)}+ h \; (\tilde{H}_t *_H \overline{f})(x')\right), \quad t>0,
	 	\end{align*}
 for almost all $x'\in \R^2$ since $\overline{f}$ is independent of $z$. By Fubini's theorem 
 		\[
 		\int_J \lvert H_t *_H |f|(x',z)\rvert \,dz 
 		= \Big(H_t *_H \int_J \lvert f(\cdot,z)\rvert\,dz \Big)(x'), \quad \mbox{a.a. } x' \in \R^2,
 		\]
	 which allows us to  conclude that
 		\begin{align*}
 		\left\| S_\infty(t) \PP (-\Delta_H)^{\alpha/2} f \right\|_{\infty,1} 
 		& \leq t^{-\alpha/2} 
 		\left( \|H_t\|_{L^1(\mathbb{R}^2)}+\|\tilde{H}_t\|_{L^1(\mathbb{R}^2)} \right)
 		\|f\|_{\infty,1}. \\
 		& \le ( C_\alpha + \tilde{C}_\alpha) t^{-\alpha/2}  \|f\|_{\infty,1}, \quad t>0.
 		\end{align*}
(iv) As above we have
		\begin{align*}
	 	\left\| S_\infty(t) \PP \nabla_H\cdot f \right\|_{L^1(J)}
	 	&\leq \left\| \nabla_H e^{t\Delta_H}   f \right\|_{L^1(J)} 
	 	+ \sum_{1\leq i,j\leq 2} 
	 	\left\| e^{t\Delta_H} R_i R_j \nabla_H\cdot \overline{f} \right\|_{L^1(J)}.
	 	\end{align*}
	 The first term was already estimated and the second one is treated in the same way  as in iii).
	
\end{proof}

In order to  extend the above  estimates to $L^\infty(L^q)$ space, it is convenient to investigate  the periodic heat semigroup.
\vspace{.2cm}\noindent
\begin{lemma} \label{lem4.2} 
Let $\mathbb{T}=\mathbb{R}/\omega_0\mathbb{Z}$ for some  $\omega_0>0$ and $f \in L^1(\mathbb{T})$. Then
		\[
		(G_t * f)(z) = \int^{\omega_0}_0 E_t (z-y) f(y)dy, \quad t>0, z \in \T, 
		\]
where $E_t(z)=\sum^\infty_{k=-\infty} G_t(z-k\omega_0)$ for $z \in \T$.  In particular, 
$$
\|G_t*f\|_{L^p(\mathbb{T})} \leq \|f\|_{L^p(\mathbb{T})}, \quad t>0.
$$If
for all $p \in [1,\infty]$. 
\end{lemma}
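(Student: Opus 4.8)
The plan is to compute the convolution $G_t\ast f$ on $\R$ directly, exploiting the $\omega_0$-periodicity of $f$, and then to read off the contraction estimate from the fact that $E_t$ is a probability density on $\T$.

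First I would write, for fixed $z\in\R$ and $t>0$,
\begin{equation*}
(G_t\ast f)(z)=\int_\R G_t(z-y)f(y)\,dy=\sum_{k=-\infty}^\infty\int_{k\omega_0}^{(k+1)\omega_0}G_t(z-y)f(y)\,dy.
\end{equation*}
Substituting $y=y'+k\omega_0$ in the $k$-th integral and using $f(y'+k\omega_0)=f(y')$, each summand becomes $\int_0^{\omega_0}G_t(z-y'-k\omega_0)f(y')\,dy'$. Since $G_t\ge 0$ and $f\in L^1(0,\omega_0)$, Tonelli's theorem permits interchanging the summation and integration, which yields
\begin{equation*}
(G_t\ast f)(z)=\int_0^{\omega_0}\Big(\sum_{k=-\infty}^\infty G_t(z-y'-k\omega_0)\Big)f(y')\,dy'=\int_0^{\omega_0}E_t(z-y')f(y')\,dy',
\end{equation*}
which is the claimed representation. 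The rapid Gaussian decay guarantees that the series defining $E_t$ converges absolutely and locally uniformly, so $E_t$ is a well-defined, nonnegative, $\omega_0$-periodic function.

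For the norm bound I would first observe that $E_t$ has unit mass on one period: interchanging sum and integral once more (again legitimate by Tonelli),
\begin{equation*}
\int_0^{\omega_0}E_t(z)\,dz=\sum_{k=-\infty}^\infty\int_0^{\omega_0}G_t(z-k\omega_0)\,dz=\int_\R G_t(w)\,dw=1,
\end{equation*}
since the translated intervals $(-k\omega_0,(1-k)\omega_0)$ tile $\R$ as $k$ ranges over $\Z$. Hence $\|E_t\|_{L^1(\T)}=1$. The representation above identifies $G_t\ast f$ with the periodic convolution $E_t\ast_\T f$, so Young's inequality on $\T$ gives
\begin{equation*}
\|G_t\ast f\|_{L^p(\T)}=\|E_t\ast_\T f\|_{L^p(\T)}\le\|E_t\|_{L^1(\T)}\,\|f\|_{L^p(\T)}=\|f\|_{L^p(\T)}
\end{equation*}
for every $p\in[1,\infty]$, as required.

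The only genuine point requiring care, and the main (though mild) obstacle, is the justification of the two interchanges of summation and integration. Both are handled cleanly by Tonelli's theorem precisely because the heat kernel is nonnegative, so no cancellation or conditional convergence intervenes. One should also record that $(G_t\ast f)(z)$ is itself $\omega_0$-periodic in $z$ and independent of the choice of representative period interval, which is immediate from the periodicity of both $E_t$ and $f$.
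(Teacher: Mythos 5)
Your proof is correct and follows essentially the same route as the paper: splitting the convolution over period intervals, translating by $k\omega_0$ and using periodicity of $f$ to obtain the kernel $E_t$, then deducing the contraction from $\int_0^{\omega_0}E_t=1$, $E_t\ge 0$, and Young's inequality on $\T$. Your explicit justification of the sum--integral interchanges via Tonelli is a welcome addition that the paper leaves implicit.
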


\begin{proof}
The above representation  for  $G_t*f$ follows by noting that 
		\[
		(G_t*f)(z) = \sum^\infty_{k=-\infty} \int^{(k+1)\omega_0}_{k\omega_0} G_t(z-y)f(y)dy, \quad t>0, z \in T, 
		\]
		and
		\[
		\int^{(k+1)\omega_0}_{k\omega_0} G_t(z-y) f(y) dy = \int^{\omega_0}_0 G_t(z-y-k\omega_0) f(y+k\omega_0) dy, \quad t>0
		\]
where  $f(y+k\omega_0)=f(y)$ for all $k \in \Z$ by periodicity. The estimate claimed follows by  Young's inequality since $\int^{\omega_0}_0 E_t(z-y) dz = \int^\infty_{-\infty} G_t(z-y) dz=1$ for 
all $t>0$ and since  $E_t\geq 0$ for all $t>0$. 
\end{proof}

\vspace{.2cm}\noindent
\begin{lemma}[Derivative estimate for the periodic heat semigroup] \label{lem4.3} 
Under the assumption of Lemma \ref{lem4.2}, there exists a constant $C>0$, independent of $\omega_0$, such that
		\[
		\left| \partial_z (G_t*f)(z) \right| \leq C t^{-1/2} \int^{\omega_0}_0 E_{2t}(z-y) \left|f(y)\right|dy, \quad  t>0, z \in \T,
		\]
In particular,
		\[
		\left\| \partial_z (G_t*f) \right\|_{L^p(\mathbb{T})} 
		\leq C t^{-1/2} \|f\|_{L^p(\mathbb{T})}, \quad t>0,
		\]
for all $p \in [1,\infty]$.
\end{lemma}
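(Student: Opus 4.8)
The plan is to reduce everything to a one-dimensional pointwise Gaussian bound and then invoke the convolution representation of Lemma \ref{lem4.2}. First I would differentiate the representation
$$(G_t*f)(z)=\int_0^{\omega_0}E_t(z-y)f(y)\,dy$$
under the integral sign; this is justified by the rapid (Schwartz) decay of the Gaussian and all its derivatives, which makes the defining series $E_t(z)=\sum_k G_t(z-k\omega_0)$ differentiable term by term. One then obtains $\partial_z(G_t*f)(z)=\int_0^{\omega_0}\partial_z E_t(z-y)f(y)\,dy$ with $\partial_z E_t(z)=\sum_{k=-\infty}^{\infty}G_t'(z-k\omega_0)$.

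The key step is the one-dimensional analogue of \eqref{2.7} for the heat kernel $G_t$ on $\R$. Repeating verbatim the elementary computation from the proof of Lemma \ref{lem2.3}, namely writing $e^{-z^2/4t}=e^{-z^2/8t}\cdot e^{-z^2/8t}$ and using the finite constant $C_0=\sup\{u\,e^{-u^2/2}:u>0\}$, I would establish
$$\lvert G_t'(z)\rvert\le C\,t^{-1/2}G_{2t}(z),\quad z\in\R,\ t>0,$$
with an absolute constant $C$. Summing this over the lattice $\omega_0\Z$ gives the termwise bound $\lvert\partial_z E_t(z)\rvert\le C\,t^{-1/2}E_{2t}(z)$, and substituting into the differentiated representation yields the claimed pointwise estimate
$$\lvert\partial_z(G_t*f)(z)\rvert\le C\,t^{-1/2}\int_0^{\omega_0}E_{2t}(z-y)\lvert f(y)\rvert\,dy.$$

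For the $L^p$ bound I would appeal to Young's inequality on $\T$ exactly as at the end of the proof of Lemma \ref{lem4.2}: since $E_{2t}\ge 0$ and $\int_0^{\omega_0}E_{2t}(z)\,dz=\int_{-\infty}^{\infty}G_{2t}=1$, the periodic convolution against $E_{2t}$ is a contraction on $L^p(\T)$ for every $p\in[1,\infty]$, whence $\|\partial_z(G_t*f)\|_{L^p(\T)}\le C\,t^{-1/2}\|f\|_{L^p(\T)}$.

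I do not anticipate a genuine obstacle here; the only points requiring care are the justification of differentiating under the integral and the series, and, more importantly, tracking that $C$ is independent of $\omega_0$. The latter is automatic: the pointwise estimate on $G_t'$ is a statement about the non-periodic kernel and is therefore $\omega_0$-free, while Young's inequality contributes only the factor $\|E_{2t}\|_{L^1(\T)}=1$, again independent of $\omega_0$.
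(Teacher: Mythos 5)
Your argument is correct and follows the paper's own route: the paper likewise derives the pointwise bound from the Gaussian derivative estimate \eqref{2.7}, $\lvert \partial_z G_t(z)\rvert \le C t^{-1/2} G_{2t}(z)$, and then concludes the $L^p$ estimate via Young's inequality, exactly as you do. Your additional remarks on termwise differentiation of the series and the $\omega_0$-independence of the constant are a welcome but inessential elaboration of the same proof.
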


\begin{proof}
		%
By \eqref{2.7}
		\[
		\left| \partial_z G_t(z) \right| \leq C t^{-1/2} G_{2t}(z), \quad t>0, z \in \T, 
		\]
which implies the first assertion. The second one follows  by Young's inequality.  
\end{proof}

\vspace{.2cm}\noindent
\begin{lemma}[Periodization] \label{lem4.4} 
Given $1\leq p\leq\infty$, there exists a  constant $C>0$ such that
		\begin{align*}
		\left\| e^{t\Delta_*}f \right\|_{L^p(J)} \leq \|f\|_{L^p(J)} 
		\quad \mbox{and} \quad 
		\left\| \partial_z e^{t\Delta_*}f \right\|_{L^p(J)} 
		\leq C t^{-1/2} \|f\|_{L^p(J)}, \quad t>0, f \in L^p(J).
		\end{align*}

\end{lemma}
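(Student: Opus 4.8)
The statement to prove is Lemma~\ref{lem4.4}: for the one-dimensional heat semigroups $e^{t\Delta_*}$ (either Neumann-Neumann or Dirichlet-Neumann) on the bounded interval $J=(z_0,z_1)$, we have $\|e^{t\Delta_*}f\|_{L^p(J)}\le\|f\|_{L^p(J)}$ and $\|\partial_z e^{t\Delta_*}f\|_{L^p(J)}\le Ct^{-1/2}\|f\|_{L^p(J)}$ for all $1\le p\le\infty$.

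The approach is to reduce the bounded-interval problem to the periodic whole-line problem via a periodization (reflection/extension) trick, so that the already-established Lemmas~\ref{lem4.2} and \ref{lem4.3} for the periodic Gauss kernel apply. This is exactly the strategy flagged in Remark~\ref{rem3.4}. Let me plan the proof.

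For the Neumann-Neumann case (\eqref{3.2a}), the idea is to extend $f$ from $(z_0,z_1)$ to all of $\R$ by even reflection across both endpoints $z_0$ and $z_1$, producing a periodic function of period $2h$ where $h=z_1-z_0$. The even extension respects the Neumann boundary conditions: the whole-line heat solution $G_t*\tilde f$ restricted to $J$ has vanishing normal derivative at $z_0,z_1$ by symmetry, hence it coincides with $e^{t\Delta_N}f$ on $J$. For the Dirichlet-Neumann case (\eqref{3.2b}), one instead uses an odd reflection at the Dirichlet endpoint $z_0$ and an even reflection at the Neumann endpoint $z_1$, again extending periodically; the resulting extension $\tilde f$ is periodic with period $4h$ and the convolution $G_t*\tilde f$ satisfies the correct boundary conditions on $J$ by the parity symmetries.

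Once the identification $e^{t\Delta_*}f = \restr{(G_t*\tilde f)}{J}$ is established, both estimates follow immediately by restriction. For the contractivity estimate, Lemma~\ref{lem4.2} gives $\|G_t*\tilde f\|_{L^p(\mathbb{T})}\le\|\tilde f\|_{L^p(\mathbb{T})}$, and since the reflection is isometric (one period of $\tilde f$ consists of copies of $f$ on $J$), restricting to $J$ yields $\|e^{t\Delta_*}f\|_{L^p(J)}\le\|f\|_{L^p(J)}$. For the derivative estimate, Lemma~\ref{lem4.3} gives $\|\partial_z(G_t*\tilde f)\|_{L^p(\mathbb{T})}\le Ct^{-1/2}\|\tilde f\|_{L^p(\mathbb{T})}$ with $C$ independent of the period $\omega_0$, and again restriction to $J$ together with the isometric nature of the extension produces the claimed bound $\|\partial_z e^{t\Delta_*}f\|_{L^p(J)}\le Ct^{-1/2}\|f\|_{L^p(J)}$.

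The main obstacle, and the step requiring genuine care, is verifying that the reflected-and-periodized extension actually reproduces the correct semigroup, i.e.\ that $\restr{(G_t*\tilde f)}{J}=e^{t\Delta_*}f$. This requires checking that $G_t*\tilde f$ solves the heat equation on $J$ with the prescribed boundary conditions; the interior equation is automatic since $\tilde f$ solves the whole-line heat equation, so the content is the boundary behavior, which is governed by the parity of the extension. For Neumann conditions the even reflection forces $\partial_z$ of the even function to vanish at the reflection points, and for Dirichlet conditions the odd reflection forces the odd function itself to vanish at the reflection point; these symmetries are preserved by convolution with the even kernel $G_t$. One then invokes uniqueness of solutions to these well-posed one-dimensional parabolic problems to conclude equality. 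The rest is bookkeeping about how many fundamental domains of $f$ sit inside one period of $\tilde f$, which only affects constants and leaves the estimates intact.
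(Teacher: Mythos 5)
Your proposal is correct and follows essentially the same route as the paper: reflect $f$ evenly (and, in the Dirichlet--Neumann case, oddly at $z_0$) to a periodic extension, identify $e^{t\Delta_*}f$ with the restriction of $G_t*f_{per}$ to $J$, and invoke Lemmas~\ref{lem4.2} and \ref{lem4.3}. You in fact spell out the Dirichlet--Neumann reflection and the uniqueness argument for the identification more explicitly than the paper, which only treats the Neumann case, but this is a difference of detail rather than of method.
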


\begin{proof}
Consider the case, where  $\Delta_*=\Delta_N$. We first extend $f \in L^p(J)$ to $(z_0-h,z_0)$ 
by even extension, i.e., by setting $f(z) = f(-z)$ for $z \in (z_0-h,z_0)$. 
and extend then $f$ to  a periodic function $f_{per}$ with period $\omega_0=2h$ by $f_{per}(z) = f(z-k\omega_0)$ for $z \in \left( k\omega_0,(k+1)\omega_0 \right)$ and 
$k \in \mathbb{Z}$, see Figure~\ref{fig:extension} . It then follows that
		\[
		e^{t\Delta_N}f = \left. e^{t\Delta}f_{per} \right|_J, 
		\]
and $\|f\|_{L^p(J)}=\|f_{per}\|_{L^p(-h,h)}/2$.
The desired estimates follow then from Lemma \ref{lem4.2} and Lemma \ref{lem4.3}.

\begin{figure}
	\begin{center}
		\begin{tikzpicture}[scale=0.8]
		\draw (-0.5*pi,0) -- (2.5*pi,0) (-0.5*pi,-1) -- (-0.5*pi,1);
		\draw (-0.5*pi,-1) node[below=3pt] {$z_0-h$};
		\draw[dashed] plot[domain=-0.5*pi:0.5*pi,smooth] (\x,{-sin(\x r)});
		\draw (0.5*pi,-1) -- (0.5*pi,1);
		\draw[<->]  (0.4*pi,-0.7) -- (0.6*pi,-0.7);
		\draw (0.5*pi,-1) node[below=3pt] {$z_0$};
		
		\draw[very thick] plot[domain=0.5*pi:pi,smooth] (\x,{-sin(\x r)});
		
		\draw[very thick] plot[domain=pi:1.5*pi,smooth] (\x,{-sin(\x r)});
		\draw (1.5*pi,-1) -- (1.5*pi,1);
		\draw (1.5*pi,-1) node[below=3pt] {$z_0+h$};
		
		\draw[dashed] plot[domain=1.5*pi:2.5*pi,smooth] (\x,{-sin(\x r)});
		\draw (2.5*pi,-1) -- (2.5*pi,1);
		\draw (2.5*pi,-1) node[below=3pt] {$z_0+2h$};
		\end{tikzpicture}
		\caption{Periodic extension by even reflexion}
		\label{fig:extension}
	\end{center}
\end{figure}
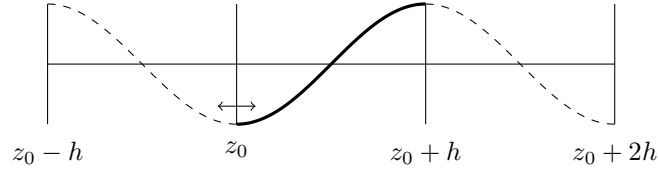

\end{proof}

These estimates are important in order  to extend Lemma \ref{lem4.1} to the situation of $L^\infty(L^q)$ spaces. 

\vspace{.2cm}\noindent
\begin{proposition} \label{thm4.5} 
Let $S_\infty$ be the semigroup on $L^\infty(\R^2;L^q(J))$ given by $S_\infty(t)=e^{t\Delta_H}\otimes e^{t\Delta_N}$. Furthermore let $\alpha\in[0,1)$ and $q\in [1,\infty]$. 
Then there exists a constant $C>0$ such that
	\begin{enumerate}
	\item[(i)] for all $f\in L^\infty(L^q)$ 
		\[
		\displaystyle \left\| \nabla S_\infty(t)f\right\|_{\infty,q} \leq C t^{-1/2}\|f\|_{\infty,q},
		\quad
		\left\| S_\infty(t)\nabla_H \cdot f\right\|_{\infty,q} \leq C t^{-1/2}\|f\|_{\infty,q}, \quad t>0.
		\]
	\item[(ii)]
	for all $f\in L^\infty(L^q)$ satisfying  $I^\alpha_{z_0}f=0$ at $z=z_1$ 
		\[
		\left\| S_\infty(t)\partial_{z} I^\alpha_{z_0}f\right\|_{\infty,q} 
		\leq C t^{-(1-\alpha)/2} \|f\|_{\infty,q}, \quad t>0.
		\]
\item[(iii)] for all $f\in L^\infty(L^q)$ 
		\[
		\left\| S_\infty(t)\PP (-\Delta_H)^{\alpha/2}f\right\|_{\infty,q} 
		\leq C t^{-\alpha/2} \|f\|_{\infty,q}, \quad t>0.
		\]
	\end{enumerate}
\end{proposition}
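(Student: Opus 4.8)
The plan is to observe that Proposition~\ref{thm4.5} is precisely the $L^q(J)$-analogue of Lemma~\ref{lem4.1}, and to rerun the proof of the latter with the vertical index $1$ replaced by $q$ throughout. The one new ingredient that legitimizes this replacement is already in place: Lemma~\ref{lem4.4} and Remark~\ref{rem3.4} furnish the vertical bounds $\|e^{t\Delta_N}g\|_{L^q(J)}\le\|g\|_{L^q(J)}$, $\|\partial_z e^{t\Delta_N}g\|_{L^q(J)}\le Ct^{-1/2}\|g\|_{L^q(J)}$ and $\|e^{t\Delta_N}\partial_z I^\alpha_{z_0}g\|_{L^q(J)}\le Ct^{-(1-\alpha)/2}\|g\|_{L^q(J)}$ for every $q\in[1,\infty]$, not merely for $q=1$. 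The tensor structure $S_\infty(t)=e^{t\Delta_H}\otimes e^{t\Delta_N}$ then lets me treat the two directions separately: for each fixed $x'\in\R^2$ I apply the vertical estimate fiberwise to bound the $L^q(J)$-norm, and I control the remaining horizontal operator by a pointwise convolution bound $|Tg|(x')\le (K_t\ast_H|g|)(x')$ with $\|K_t\|_{L^1(\R^2)}\le C$ uniform in $t$. The bridge between the two is Minkowski's integral inequality, which gives $\|(K_t\ast_H f)(x',\cdot)\|_{L^q(J)}\le (|K_t|\ast_H\|f(\cdot,\cdot)\|_{L^q(J)})(x')$, where the inner norm is read as a function of the horizontal slot; taking $\sup_{x'}$ and applying Young's inequality produces the factor $\|K_t\|_1$ and the norm $\|f\|_{\infty,q}$. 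This is exactly the Fubini step of Lemma~\ref{lem4.1} for $q=1$.

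Concretely, for (i) I write $\nabla_H S_\infty(t)f=(\nabla_H e^{t\Delta_H})\otimes e^{t\Delta_N}f$ and combine the kernel bound $|\nabla_H e^{t\Delta_H}g|\le Ct^{-1/2}G_{2t}\ast_H|g|$ from \eqref{2.7} with the $L^q(J)$-contraction of $e^{t\Delta_N}$; for the vertical component $\partial_z S_\infty(t)f=e^{t\Delta_H}\otimes(\partial_z e^{t\Delta_N})f$ I combine the $L^\infty(\R^2)$-contraction of $e^{t\Delta_H}$ (convolution with $G_t$, where $\|G_t\|_{L^1(\R^2)}=1$) with the vertical decay $t^{-1/2}$ of Lemma~\ref{lem4.4}. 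The operator $S_\infty(t)\nabla_H\cdot f$ is handled identically, since $e^{t\Delta_H}$ commutes with $\nabla_H$. Assertion (ii) is even more direct: $S_\infty(t)\partial_z I^\alpha_{z_0}f=e^{t\Delta_H}\otimes(e^{t\Delta_N}\partial_z I^\alpha_{z_0})f$, so Remark~\ref{rem3.4} applied fiberwise gives the factor $t^{-(1-\alpha)/2}$ and the horizontal $L^\infty$-contraction costs nothing.

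For (iii) I first expand the hydrostatic Helmholtz projection as $\PP(-\Delta_H)^{\alpha/2}f=(-\Delta_H)^{\alpha/2}f+\nabla_H(-\Delta_H)^{-1}\divh(-\Delta_H)^{\alpha/2}\overline{f}$, using that $(-\Delta_H)^{\alpha/2}$ acts only horizontally and hence commutes with the vertical average $\overline{\,\cdot\,}$, and that $\nabla_H(-\Delta_H)^{-1}\divh$ is a matrix of double Riesz transforms $R_iR_j$. Applying $S_\infty(t)$ to the first term and invoking Lemma~\ref{lem3.2}(i) yields the pointwise bound $t^{-\alpha/2}H_t\ast_H|f|$ with $\|H_t\|_1\le C_\alpha$, while the second term, which acts on the $z$-independent field $\overline{f}$, is controlled by Lemma~\ref{lem3.2}(ii) through $t^{-\alpha/2}\tilde H_t\ast_H|\overline{f}|$ with $\|\tilde H_t\|_1\le\tilde C_\alpha$. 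Since $\overline{f}$ is constant in $z$, its $L^q(J)$-norm equals $h^{1/q}|\overline{f}|$ and $\|\overline{f}\|_{\infty,q}\le C\|f\|_{\infty,q}$, after which the Minkowski-plus-Young combination from the first paragraph assembles the claimed estimate.

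I do not expect a genuine obstacle, since every hard analytic estimate --- the Gaussian kernel bounds of Lemma~\ref{lem3.2}, the Caputo-type interpolation behind Lemma~\ref{lem3.1}, and the periodization of Lemma~\ref{lem4.4} --- has already been carried out. The only point demanding care is the order of norms: one must take the vertical $L^q(J)$-norm fiberwise \emph{before} the horizontal $L^\infty(\R^2)$-norm, and justify the interchange through Minkowski's integral inequality rather than Fubini, which suffices only at $q=1$. The endpoint $q=\infty$ is covered because Lemma~\ref{lem4.4} and Remark~\ref{rem3.4} are stated for all $p\in[1,\infty]$; there the fiberwise step uses the pointwise bound $|K_t\ast_H f(x',z)|\le(|K_t|\ast_H\|f(\cdot,\cdot)\|_{L^\infty(J)})(x')$ directly.
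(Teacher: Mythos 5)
Your proposal is correct and follows essentially the same route as the paper: the paper likewise proves (i) by applying the vertical bounds of Lemma~\ref{lem4.4} fiberwise, passing the $L^q(J)$-norm through the horizontal convolution via Minkowski's inequality and the positivity/Gaussian bounds, and it disposes of (ii) and (iii) by invoking the $L^q$ versions of Lemma~\ref{lem4.1} and Remark~\ref{rem3.4}, which is exactly the fiberwise argument you spell out. Your explicit treatment of the Helmholtz projection term via Lemma~\ref{lem3.2}(i)--(ii) and the bound $\|\overline{f}\|_{\infty,q}\le C\|f\|_{\infty,q}$ is precisely what the paper's terse reference presupposes.
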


\begin{remark}\label{rem:l1lq}
Note that also the following $L^{\infty}(L^1)$-$L^{\infty}(L^q)$ smoothing holds for $q\in [1,\infty]$
\begin{align*}
\left\| S_\infty(t)f\right\|_{\infty,q} 
		\leq C t^{-(1-1/q)} \|f\|_{\infty,1}, \quad t>0.
\end{align*}

We also remark that, if  the case $\alpha =0$ is considered in  assertion ii), the term  $I^0_{z_0}$ is interpreted as the identity operator and there is no restriction for $f$ other 
than $f \in L^\infty(L^q)$.

\end{remark}

\begin{proof}
i) We first prove that $\left\|\partial_z S_\infty(t)f\right\|_{\infty,q} \leq Ct^{-1/2}\|f\|_{\infty,q}$ for all $t>0$. By Lemma \ref{lem4.4}
		\[
		\left\|\partial_z S_\infty(t)f(x',\cdot)\right\|_{L^q(J)} \leq C t^{-1/2} \|e^{t\Delta_H}f(x',\cdot)\|_{L^q(J)} 
		\]
for almost all $x' \in \R^2$. By Minkowski's inequality and due to the positivity of $e^{t\Delta_H}$
		\[
		\|e^{t\Delta_H}f(x',\cdot)\|_{L^q(J)} \leq  e^{t\Delta_H}\|f(x',\cdot)\|_{L^q(J)}, 
		\]
and thus 
		\begin{align*}
		\left\| \partial_z S_\infty(t)f\right\|_{\infty,q} 
		 \leq C t^{-1/2}\ \mathrm{ess.sup}_{x'}\left( e^{t\Delta_H}\|f(x',\cdot)\|_{L^q(J)} \right) \leq C t^{-1/2} \|f\|_{\infty,q}, \quad t>0.
		\end{align*}
	We next prove that $\left\| \nabla_H S_\infty(t)f \right\|_{\infty,q} \leq C t^{-1/2} \|f\|_{\infty,q}$ for all $t>0$. To this end, we estimate
		\[
		\| e^{t\Delta_N} \nabla_H e^{t\Delta_H}f(x',\cdot) \|_{L^q(J)}
		\leq \| \nabla_H e^{t\Delta_H}f(x',\cdot) \|_{L^q(J)}.
		\]
	As in Lemma \ref{lem4.3}, we observe that
		\[
		| \nabla_H e^{t\Delta_H} f(x',z)| 
		\leq C t^{-1/2}\left( G_{2t} *_H|f| \right) (x',z), 
		\]
	and applying Minkowski's inequality yields
		\[
		\| \nabla_H e^{t\Delta_H} f(x',\cdot) |_{L^q(J)}  
		\leq C t^{-1/2} (G_{2t} *_H \|f(x',\cdot)\|_{L^q(J)}).
		\]
	We thus conclude that
		\[
		\left\| \nabla_H S_\infty(t) f \right\|_{\infty,q} 
		\leq C t^{-1/2} \|f\|_{\infty,q}, \quad t>0.
		\]
The second part of i) follows from $S(t)\partial_{x_i}f=\partial_{x_i}S(t)f$ for $i=1,2$. The remaining assertions ii) and iii) follow from the $L^q$ versions of Lemma \ref{lem4.1} and 
Remark \ref{rem3.4}. 
\end{proof}

\vspace{.2cm}\noindent
\section{Proof of main results}\label{sec:proofs} 

In the following, we  construct a solution of the integral equation \eqref{5.1}.
We start by estimating the integral term for functions with vanishing vertical average. 

\vspace{.2cm}\noindent
\begin{lemma} \label{prop5.1} 
For all $\alpha \in (0,1)$ there exists a constant $C>0$ such that
\begin{align*}
		\left\|S(t)\PP\nabla\cdot(\tilde{u}\otimes v)\right\|_{\infty,1} 
		\leq C t^{-(1-\alpha)/2}
		\big(\|\nabla\tilde{v}\|_{\infty,1}\|v\|_{\infty,1}+\|\tilde{v}\|_{\infty,1}\|\nabla v\|_{\infty,1}\big)^{1-\alpha} 
		\big(\|\nabla v\|_{\infty,1}\|\nabla\tilde{v}\|_{\infty,1}\big)^\alpha. 
		\end{align*}
for all $v \in L^\infty_\sigma(L^1)$ satisfying $\overline{v}=0$ and all $\tilde{u}=(\tilde{v},\tilde{w})$ with  $\tilde{v }\in L^\infty_\sigma(L^1)$ satisfying $\overline{\tilde{v}}=0$ 
and  $\tilde{w}=\int^{z_1}_z \operatorname{div}_H \tilde{v}\;\mathrm{d}x_3$.
The statement stil holds true for $\alpha=0$.
\end{lemma}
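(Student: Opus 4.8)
The plan is to estimate the nonlinear term $\PP\nabla\cdot(\tilde u\otimes v)$ by first splitting the three-dimensional divergence into its horizontal and vertical parts, and then applying the anisotropic semigroup estimates from Lemma~\ref{lem4.1}. Writing $\tilde u=(\tilde v,\tilde w)$ and $\nabla\cdot(\tilde u\otimes v)=\nabla_H\cdot(\tilde v\otimes v)+\partial_z(\tilde w\, v)$, I would treat the two resulting contributions separately. For the horizontal part, the factor $\PP\nabla_H\cdot$ is handled by Lemma~\ref{lem4.1}(iv), which gains a factor $t^{-1/2}$; for the vertical part, since $\tilde w=\int_z^{z_1}\div_H\tilde v\,\mathrm dx_3$ vanishes at $z=z_1$, the term $\partial_z(\tilde w\,v)$ is amenable to Lemma~\ref{lem4.1}(ii) in the form involving $\partial_z I^\alpha_{z_0}$, which is the source of the $t^{-(1-\alpha)/2}$ decay in the claimed estimate.

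The key idea for obtaining the interpolated right-hand side is to distribute the smoothing so that we extract an $\alpha$-fraction of an extra horizontal derivative via the fractional-power machinery. Concretely, I would write the vertical contribution as $S(t)\PP\,\partial_z I^\alpha_{z_0}\big[(-\Delta_H)^{?}\cdots\big]$ schematically, using that $I^\alpha_{z_0}$ and $\partial_z$ interact well with the condition $\overline v=0$, $\overline{\tilde v}=0$; the hypotheses of vanishing vertical average are precisely what make the hydrostatic projection $\PP$ act trivially on the averaged parts and allow the Poincaré-type control needed in Lemma~\ref{lem2.1}. Then the product $\tilde w\,v$ is estimated in $\|\cdot\|_{\infty,1}$ by a Hölder/Minkowski argument, bounding $\|\tilde w\|$ in terms of $\|\nabla_H\tilde v\|_{\infty,1}$ through the definition of $\tilde w$ as a vertical integral of $\div_H\tilde v$.

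The interpolation structure $(\cdots)^{1-\alpha}(\cdots)^\alpha$ should emerge from combining Lemma~\ref{lem2.3} (the interpolation inequality $\|\nabla_H(-\Delta_H)^{-\alpha/2}f\|_{\infty,1}\le C\|f\|^\alpha_{\infty,1}\|\nabla_H f\|^{1-\alpha}_{\infty,1}$) with the Leibniz rule applied to the products $\tilde v\otimes v$ and $\tilde w\, v$. The two terms $\|\nabla\tilde v\|_{\infty,1}\|v\|_{\infty,1}+\|\tilde v\|_{\infty,1}\|\nabla v\|_{\infty,1}$ in the first factor are exactly what one gets by differentiating a product, while the second factor $\big(\|\nabla v\|_{\infty,1}\|\nabla\tilde v\|_{\infty,1}\big)^\alpha$ reflects the extra $\alpha$ horizontal derivatives traded for the improved time singularity. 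I would verify that the powers of $t$ add up correctly: a factor $t^{-\alpha/2}$ from the fractional horizontal smoothing (Lemma~\ref{lem4.1}(iii)) combined with $t^{-(1-\alpha)/2}$ bookkeeping must collapse to the stated single $t^{-(1-\alpha)/2}$, so the exponents have to be tracked carefully.

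The main obstacle I anticipate is the careful algebraic organization of which derivatives and fractional powers sit inside the semigroup versus which are estimated by the interpolation lemma, so that the condition $I^\alpha_{z_0}f=0$ at $z=z_1$ required by Lemma~\ref{lem4.1}(ii) is genuinely satisfied by the vertical-average-free quantities at hand, and so that the product rule generates precisely the two-term first factor rather than spurious cross terms. The case $\alpha=0$ should follow directly from Remark~\ref{rem:alpha=0} and Lemma~\ref{lem4.1}(i), since then no fractional interpolation is needed and the estimate reduces to the plain $t^{-1/2}$ bound on the product of a function and a gradient.
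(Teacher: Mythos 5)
Your plan follows essentially the same route as the paper: split off $\partial_z(\tilde w\,v)$ (whose vertical average vanishes because $\tilde w=0$ at both endpoints, so $\PP$ acts trivially on it), insert $(-\Delta_H)^{\pm(1-\alpha)/2}$ into the horizontal term and $\partial_z I^\alpha_{z_0}I^{1-\alpha}_{z_0}\partial_z$ into the vertical one, and conclude via Lemma~\ref{lem4.1}(ii)--(iii) combined with the interpolation Lemmas~\ref{lem2.1} and~\ref{lem2.3}, the product rule, and the Poincar\'e-type bounds $\|v\|_{\infty,\infty}\leq\|\partial_z v\|_{\infty,1}$, $\|\tilde w\|_{\infty,\infty}\leq C\|\nabla_H\tilde v\|_{\infty,1}$ afforded by $\overline v=\overline{\tilde v}=0$. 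The one bookkeeping point to correct is that the fractional power inserted horizontally is $(1-\alpha)/2$, so Lemma~\ref{lem4.1}(iii) yields $t^{-(1-\alpha)/2}$ directly and Lemma~\ref{lem2.3} (applied with $\alpha$ replaced by $1-\alpha$) produces the exponents $\alpha$ and $1-\alpha$ as stated; there is no separate $t^{-\alpha/2}$ factor to combine.
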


\begin{proof}
We first note that
		\[
		\nabla\cdot(\tilde{u}\otimes v)
		=\nabla_H\cdot(\tilde{v}\otimes v)+\partial_z(\tilde{w}v).
		\]
Since $\operatorname{div}_H \overline{\tilde{v}}=0$ we obtain $\tilde{w}=0$ at $z=z_0$ and since $\tilde{w}=0$ at $z=z_1$ by definition, we see that 
$\overline{\partial_z(\tilde{w}v)}=0$. Hence,  
		\[
		\PP\nabla\cdot(\tilde{u}\otimes v)
		=\PP\nabla_H\cdot(\tilde{v}\otimes v)+\partial_z(\tilde{w}v).
		\]
The cases $\alpha = 0$ is now straight forward using Lemma~\ref{lem4.1} (ii), (iv). Consider now the case $\alpha\in(0,1)$.
Noting that $(-\Delta_H)^{(1-\alpha)/2}$,$(-\Delta_H)^{-(1-\alpha)/2}$ and $\nabla_H$ commute, we write
		\begin{align*}
		S(t)\PP\nabla\cdot(\tilde{u}\otimes v)
		& =S(t)\PP(-\Delta_H)^{(1-\alpha)/2}\nabla_H\cdot(-\Delta_H)^{-(1-\alpha)/2}(\tilde{v}\otimes v) 
		+S(t)\partial_z I^\alpha_{z_0} I^{1-\alpha}_{z_0} \partial_z (\tilde{w}v), \quad t>0, 
		\\
		& =: I + I\!I.
		\end{align*}
Applying Lemma \ref{lem4.1} iii) and Lemma \ref{lem2.3} yields 
		\[
		\|I\|_{\infty,1} 
		\leq C t^{-(1-\alpha)/2}
		\left\|\nabla_H \cdot (-\Delta_H)^{-(1-\alpha)/2}\tilde{v}\otimes 	v\right\|_{\infty,1}
		\leq C t^{-(1-\alpha)/2} \left\|\nabla_H(\tilde{v}\otimes v)\right\|^\alpha_{\infty,1}
		\left\|\tilde{v}\otimes v \right\|^{1-\alpha}_{\infty,1}, \quad t>0.
		\]
Since  $\overline{v}=0$ and $\overline{\tilde{v}}=0$, we obtain the estimates
		\begin{align*}
		\left\|\nabla(\tilde{v}\otimes v)\right\|_{\infty,1} 
		&\leq \left\|\nabla\tilde{v}\right\|_{\infty,1} \|v\|_{\infty,\infty}
		+ \|\tilde{v}\|_{\infty,\infty} \|\nabla v\|_{\infty,1}, \\
		\|\tilde{v}\otimes v \|_{\infty,1} 
		&\leq \|\tilde{v}\|_{\infty,1} \|v\|_{\infty,\infty} + \|\tilde{v}\|_{\infty,\infty} \|v\|_{\infty,1}, \\
		\|v\|_{\infty,\infty} &\leq \|\partial_z v\|_{\infty,1}, \\
		\|\tilde{v}\|_{\infty,\infty} &\leq \|\partial_z \tilde{v}\|_{\infty,1}
		\end{align*}
and the term $\|I\|_{\infty,1}$ can be thus  estimated as claimed.

In order to estimate  $\|II\|_{\infty,1}$ we observe that Lemma \ref{lem4.1} (i\hspace{-.1em}i) and Lemma \ref{lem2.1} yield 
		\[
		\|I\!I\|_{\infty,1} 
		\leq C t^{-(1-\alpha)/2}\left\|\partial^\alpha_z(\tilde{w}v)\right\|_{\infty,1} 
		\leq C t^{-(1-\alpha)/2}\|\tilde{w}v\|^{1-\alpha}_{\infty,1} 
		\left\|\partial_z(\tilde{w}v)\right\|^\alpha_{\infty,1}, \quad t>0.
		\]
Here we invoked the fact that
		\[
		I^\alpha_{z_0} \left(I^{1-\alpha}_{z_0}\partial_z(\tilde{w}v)\right)(z_1)=(\tilde{w}v)(z_1)=0.
		\]
Since
		\[
		\|\tilde{w}\|_{\infty,\infty} \leq C \left\|\partial_z\tilde{w}\right\|_{\infty,1} \leq  C \left\| \nabla_H 	\tilde{v}\right\|_{\infty,1}
		\]
we are able to estimate $\|I\!I\|_{\infty,1}$ in the same way as $I$. This completes the proof.
\end{proof}

Our next step consists in proving a similar  estimate for the above integral term, however, without assuming that the vertical average of the functions involved is vanishing.
To this end, we set 
$$
\|v\|_{1,\infty,1}:=\|v\|_{\infty,1}+\|\nabla v\|_{\infty,1}.
$$

\vspace{.2cm}\noindent
\begin{proposition} \label{prop5.2} 
There exists a constant $C>0$ such that for all $\alpha \in (0,1)$
		\begin{align*}
		\left\|S(t)\PP\nabla\cdot(\tilde{u}\otimes v)\right\|_{\infty,1} 
		& \leq C t^{-(1-\alpha)/2}\big(\|\tilde{v}\|_{1,\infty,1}\|v\|_{\infty,1}
		+\|v\|_{1,\infty,1}\|\tilde{v}\|_{\infty,1}\big)^{1-\alpha} 
		\big(\|\tilde{v}\|_{1,\infty,1}\|v\|_{1,\infty,1}\big)^\alpha. 
		\end{align*}
for all $v \in L^\infty_\sigma(L^1)$ satisfying $\nabla v \in L^\infty(L^1)$  and all $\tilde{u}=(\tilde{v},\tilde{w})$ with  $\tilde{v }\in L^\infty_\sigma(L^1)$  and 
$\nabla \tilde{v} \in L^\infty(L^1)$ and  and  $\tilde{w}= \int^{z_1}_z \operatorname{div}_H \tilde{v}\;\mathrm{d}x_3$. The statement stil holds true for $\alpha=0$.
\end{proposition}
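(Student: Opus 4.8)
The plan is to follow the proof of Lemma~\ref{prop5.1} essentially line by line; the only structural change is that the Poincar\'e-type bounds $\|v\|_{\infty,\infty}\le\|\partial_z v\|_{\infty,1}$ and $\|\tilde v\|_{\infty,\infty}\le\|\partial_z\tilde v\|_{\infty,1}$, which in Lemma~\ref{prop5.1} rely on the vanishing of $\overline v$ and $\overline{\tilde v}$, must now be replaced by the full one-dimensional Sobolev embedding $W^{1,1}(J)\hookrightarrow L^\infty(J)$, i.e. $\|v\|_{\infty,\infty}\le C\|v\|_{1,\infty,1}$ and $\|\tilde v\|_{\infty,\infty}\le C\|\tilde v\|_{1,\infty,1}$. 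This substitution is precisely the source of the extra $\|\cdot\|_{\infty,1}$ contributions that upgrade the gradient norms appearing in Lemma~\ref{prop5.1} to the full norms $\|\cdot\|_{1,\infty,1}$ here.

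First I would verify that the splitting underlying Lemma~\ref{prop5.1} survives without the zero-average hypothesis. Writing $\nabla\cdot(\tilde u\otimes v)=\nabla_H\cdot(\tilde v\otimes v)+\partial_z(\tilde w v)$, the crucial point is that $\tilde w$ still vanishes at both endpoints: $\tilde w(z_1)=0$ by definition, while $\tilde w(z_0)=\int_J\operatorname{div}_H\tilde v\,\mathrm dx_3=h\operatorname{div}_H\overline{\tilde v}=0$ because $\tilde v\in L^\infty_\sigma(L^1)$. Consequently $(\tilde w v)(z_0)=(\tilde w v)(z_1)=0$, so $\overline{\partial_z(\tilde w v)}=0$ and hence $\PP\partial_z(\tilde w v)=\partial_z(\tilde w v)$, yielding $\PP\nabla\cdot(\tilde u\otimes v)=\PP\nabla_H\cdot(\tilde v\otimes v)+\partial_z(\tilde w v)$ exactly as before. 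These endpoint identities use only the solenoidality of $\tilde v$, never $\overline v=0$ or $\overline{\tilde v}=0$. The case $\alpha=0$ then follows directly from Lemma~\ref{lem4.1}(ii),(iv) (see Remark~\ref{rem:alpha=0}) combined with the product estimates below.

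For $\alpha\in(0,1)$ I would decompose $S(t)\PP\nabla\cdot(\tilde u\otimes v)=I+I\!I$ with the same operators as in Lemma~\ref{prop5.1}, namely $I=S(t)\PP(-\Delta_H)^{(1-\alpha)/2}\nabla_H\cdot(-\Delta_H)^{-(1-\alpha)/2}(\tilde v\otimes v)$ and $I\!I=S(t)\partial_z I^\alpha_{z_0}I^{1-\alpha}_{z_0}\partial_z(\tilde w v)$. Lemma~\ref{lem4.1}(iii) and Lemma~\ref{lem2.3} give $\|I\|_{\infty,1}\le Ct^{-(1-\alpha)/2}\|\tilde v\otimes v\|_{\infty,1}^{1-\alpha}\|\nabla_H(\tilde v\otimes v)\|_{\infty,1}^{\alpha}$, while Lemma~\ref{lem4.1}(ii) (whose hypothesis $I^\alpha_{z_0}I^{1-\alpha}_{z_0}\partial_z(\tilde w v)=\tilde w v=0$ at $z=z_1$ holds by the endpoint computation above, using $(\tilde w v)(z_0)=0$) together with Lemma~\ref{lem2.1} gives $\|I\!I\|_{\infty,1}\le Ct^{-(1-\alpha)/2}\|\tilde w v\|_{\infty,1}^{1-\alpha}\|\partial_z(\tilde w v)\|_{\infty,1}^{\alpha}$. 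Note that both singular factors are $t^{-(1-\alpha)/2}$, matching the claim.

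It then remains to run the product estimates with the new embedding. Using $\|v\|_{\infty,\infty}\le C\|v\|_{1,\infty,1}$, $\|\tilde v\|_{\infty,\infty}\le C\|\tilde v\|_{1,\infty,1}$, and $\|\tilde w\|_{\infty,\infty}\le C\|\nabla_H\tilde v\|_{\infty,1}\le C\|\tilde v\|_{1,\infty,1}$, together with the product rule for $\nabla_H$ and for $\partial_z$ (recalling $\partial_z\tilde w=-\operatorname{div}_H\tilde v$), I would obtain $\|\tilde v\otimes v\|_{\infty,1}\le C\|v\|_{1,\infty,1}\|\tilde v\|_{\infty,1}$ and $\|\tilde w v\|_{\infty,1}\le C\|\tilde v\|_{1,\infty,1}\|v\|_{\infty,1}$, each bounded by $C(\|\tilde v\|_{1,\infty,1}\|v\|_{\infty,1}+\|v\|_{1,\infty,1}\|\tilde v\|_{\infty,1})$, as well as $\|\nabla_H(\tilde v\otimes v)\|_{\infty,1}+\|\partial_z(\tilde w v)\|_{\infty,1}\le C\|\tilde v\|_{1,\infty,1}\|v\|_{1,\infty,1}$. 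Inserting these into the bounds for $I$ and $I\!I$ and adding yields the claim. The only genuinely delicate point — and the one I would treat most carefully — is the bookkeeping in these product estimates, ensuring that the two homogeneity factors land exactly on $\big(\|\tilde v\|_{1,\infty,1}\|v\|_{\infty,1}+\|v\|_{1,\infty,1}\|\tilde v\|_{\infty,1}\big)^{1-\alpha}$ and $\big(\|\tilde v\|_{1,\infty,1}\|v\|_{1,\infty,1}\big)^{\alpha}$; everything else is a verbatim repetition of Lemma~\ref{prop5.1} with the single substitution of the Sobolev embedding for the Poincar\'e inequality.
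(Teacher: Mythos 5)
Your proposal is correct and follows essentially the same route as the paper: the paper's proof also reduces to Lemma~\ref{prop5.1} after replacing the Poincar\'e-type bound by $\|v\|_{\infty,\infty}\le\|v-\overline{v}\|_{\infty,\infty}+\|\overline{v}\|_{\infty,\infty}\le\|\partial_z v\|_{\infty,1}+\|v\|_{\infty,1}=\|v\|_{1,\infty,1}$, which is exactly your Sobolev-embedding substitution. Your additional check that the endpoint vanishing of $\tilde w$ uses only solenoidality of $\tilde v$ (and not $\overline{\tilde v}=0$) is a point the paper leaves implicit, and your product-estimate bookkeeping matches the stated exponents.
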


\begin{proof}
We argue similarly as in the proof of Lemma~\ref{prop5.1}. In order to estimate $\|v\|_{\infty,\infty}$ we write  
		\[
		\|v\|_{\infty,\infty} \leq \|v-\overline{v}\|_{\infty,\infty} + \|\overline{v}\|_{\infty,\infty}.
		\]
Observing that
		\[
		\|v-\overline{v}\|_{\infty,\infty} \leq \|\partial_z v\|_{\infty,1},\quad 
		\|\overline{v}\|_{\infty,\infty} \leq \|v\|_{\infty,1}
		\]
we conclude that 
		\[
		\|v\|_{\infty,\infty} \leq \|\partial_z v\|_{\infty,1} + \|v\|_{\infty,1}.
		\]
Thus
		\begin{align*}
		\left\|\nabla(\tilde{v}\otimes v)\right\|_{\infty,1} 
		& \leq \|\tilde{v}\|_{1,\infty,1} \|v\|_{1,\infty,1} \\
		\|\tilde{v}\otimes v\|_{\infty,1} 
		& \leq \|\tilde{v}\|_{\infty,1} \|v\|_{1,\infty,1} + \|v\|_{\infty,1} \|\tilde{v}\|_{1,\infty,1}
		\end{align*}
		%
and the desired estimate follows as in the proof of Lemma \ref{prop5.1}.
\end{proof}


We now give a proof of  our main results.

\vspace{.2cm}\noindent
\begin{proof}[Proof of Theorem \ref{thm1.1}]
Consider the recursively defined sequence $(v_m)$ defined  for $t\geq 0$ by 
		\begin{align*}
		v_{m+1}(t) &:= S(t)a - \int^t_0 S(t-s)\PP\nabla \cdot (u_m(s) \otimes v_m(s))\mathrm{d}s, \quad m \in \N \\
		v_0(t) &:= S(t)a.
		\end{align*}
Applying Lemma \ref{lem4.1} (i), (ii) with $\alpha=0$, see Remark~\ref{rem:alpha=0}, we obtain
		\begin{align}\label{5.4}
		\begin{split}
		\|v_{m+1}(t)\|_{\infty,1} &
		\leq \|S(t)a\|_{\infty,1} + C_1\int^t_0(t-s)^{-1/2} \|u_m(s) \otimes v_m(s) \|_{\infty,1} \mathrm{d}s \\
		&\leq \|S(t)a\|_{\infty,1} + C_1\int^t_0 (t-s)^{-1/2} \|u_m(s)\|_{\infty,\infty} \|v_m(s)\|_{\infty,1} \mathrm{d}s  \\
		&\leq \|S(t)a\|_{\infty,1} + C_1\int^t_0 (t-s)^{-1/2} \|v_m(s)\|_{1,\infty,1} \|v_m(s)\|_{\infty,1} \mathrm{d}s,
		\end{split}
		 \end{align}
with all constants $C_1,C_2,C_3,C_4>0$ here and below being independent of $v_m$, $u_m$ and $t$. We now estimate $\|\nabla v_{m+1}(t)\|_{\infty,1}$ by Proposition \ref{prop5.2}. Since
		\[
		\nabla S(t-s) = \nabla S\big(\tfrac{t-s}{2}\big) S\big(\tfrac{t-s}{2}\big)
		\]
Lemma \ref{lem4.1} (i) and Proposition \ref{prop5.2} with $\alpha=1/2$ yield
		\begin{equation}
		\|\nabla v_{m+1}(t)\|_{\infty,1}
		\leq \|\nabla S(t)a\|_{\infty,1} 
		+ C_2\int^t_0(t-s)^{-1/2}(t-s)^{-1/4}\|v_m(s)\|^{3/2}_{1,\infty,1} \|v_m(s)\|^{1/2}_{\infty,1} \mathrm{d}s, \quad t>0. 
		\label{5.5} 
		\end{equation}
Note that in the above estimate we may also take any  $\alpha \in (0,1)$. For $m \in \N \cup \{0\}$ we now set
		\begin{align*}
		K_m(t) &:= \sup_{0<\tau<t} \tau^{1/2} \|v_m(\tau)\|_{1,\infty,1}, \\
		H_m(t) &:= \sup_{0<\tau<t} \|v_m(\tau)\|_{\infty,1}, \\
		M_m(t) &:= \sup_{0<\tau<t} \tau^{1/2} \|v_m(\tau)\|_{\infty,1}.
		\end{align*}
Estimate \eqref{5.4} combined with $\left\|S(t)a\right\|_{\infty,1}\leq\|a\|_{\infty,1}$ for all $t>0$ yields
		\begin{equation}
		H_{m+1}(t) \leq \|a\|_{\infty,1} + C_1 K_m (t) H_m(t), \quad t>0.
		\label{5.6} 
		\end{equation}
Multiplying \eqref{5.5} by $t^{1/2}$ yields
		\begin{equation}
		\sup_{0<\tau<t} \tau^{1/2} \|\nabla v_{m+1}(\tau)\|_{\infty,1}
		\leq \sup_{0<\tau<t} \tau^{1/2} \left\|\nabla S(\tau)a\right\|_{\infty,1} 
		+ C_3 K_m(t)^{3/2} H_m(t)^{1/2}, \quad t>0, 
		\label{5.7} 
		\end{equation}
and by multiplying \eqref{5.4} with $t^{1/2}$, we obtain
		\begin{equation}
		M_{m+1}(t) \leq \sup_{0<\tau<t} \tau^{1/2} \left\|S(\tau)a\right\|_{\infty,1} 
		+ C_4 t^{1/2} K_m(t) H_m(t), 
		\label{5.8} 
		\end{equation}
provided  $t \leq T$ for some $T \leq 1$. Adding \eqref{5.7} and \eqref{5.8} yields
		\begin{equation}
		K_{m+1}(t) \leq K_0(t) + C_3 K_m(t)^{3/2} H_m(t)^{1/2}
		+ C_4 t^{1/2} K_m(t) H_m(t),\quad m \geq 0
		\label{5.9} \tag{5.9}
		\end{equation}
with 
		\[
		K_0(t)
		= \sup_{0<\tau<t} \tau^{1/2} \left\|S(\tau)a\right\|_{1,\infty,1}.
		\]
If $a_1 \in BUC_\sigma(L^1)$, then
		\[
		\tau^{1/2} \left\|\nabla S(\tau)a_1\right\|_{\infty,1} \to 0 \quad\text{as}\quad 		\tau\searrow 0, \quad \hbox{and}\quad    \tau^{1/2} \left\|\nabla S(\tau)a_2\right\|_{\infty,1} \leq C \left\|a_2\right\|_{\infty,1} 
		\] 
by Lemma~\ref{lem4.1} (i).
Thus, by \eqref{5.6} and \eqref{5.9}, the sequences $(H_m)$ and $(K_m)$ fulfill the assumptions of the following Lemma \ref{lem5.4} provided  $t$ is small enough, say $t \leq t_0$ 
since $K_0(t)\to 0$ as $t\searrow  0$, 
and $\left\|a_2\right\|_{\infty,1}$ is sufficiently small. Thus the sequences $\{H_m\}$ and $\{K_m\}$ are uniformly bounded.

It is not difficult  to prove that $(v_m - S a_2)$ is a Cauchy sequence in $C\left([0,t_0],BUC_\sigma(L^1)\right)$ and that $(t^{1/2}\nabla (v_m -S a_2))$ is a Cauchy sequence in 
$C\left([0,t_0],L^\infty(L^1)\right)$. We thus obtain  $v$ as the limit of $(v_m)$ satisfying the desired estimate. 

The proof of the uniqueness follows a similar line of arguments. Let $v,\tilde{v}$ be two solutions, then 
	\begin{align*}
	(v - \tilde{v})(t) = \int^t_0 S(t-s)\PP\nabla \cdot \left(u(s) \otimes (v-\tilde{v})(s) + (u-\tilde{u}) \otimes \tilde{v}(s)\right) \mathrm{d}s, \quad t>0,
	\end{align*}
and one obtains as above using Lemma~\ref{lem4.1} (i) and 
 Proposition \ref{prop5.2} with $\alpha=1/2$,
and setting 
\begin{align*}
K(v)(t):= \sup_{0<\tau<t} \tau^{1/2} \|v(\tau)\|_{1,\infty,1} \quad \hbox{and} \quad
H(v)(t):= \sup_{0<\tau<t} \|v(\tau)\|_{\infty,1}
\end{align*}
that for $N(v)(t):= \max\{K(v)(t), H(v)(t)\}$ one has
	\begin{align*}
	N(v) &\leq C (K(v)H(v-\tilde{v}) + K(v-\tilde{v})H(v))^{1/2}(K(v)K(v-\tilde{v}))^{1/2} \\ &+ C(K(\tilde{v})H(v-\tilde{v}) 
	+  K(v-\tilde{v})H(\tilde{v}))^{1/2}(K(\tilde{v})K(v-\tilde{v}))^{1/2}.
	\end{align*}
Hence one obtains
	\begin{align}\label{eq:N}
N(v - \tilde{v}) &\leq N(v-\tilde{v})  C \left\{(K(v) + H(v))^{1/2}K(v)^{1/2} 
+ (K(\tilde{v}) +  H(\tilde{v}))^{1/2}K(\tilde{v})^{1/2}\right\}.
	\end{align}
By assumption, if $t$ is small we have
\begin{align*}
K(\tilde{v})(t), K(v)(t)\leq C \left\|a_2\right\|_{\infty,1}, \quad \hbox{and} \quad 
H(\tilde{v}), H(v)(t)\leq C \left\|a\right\|_{\infty,1}. 
\end{align*}
Thus supposing that $t$ and $\left\|a_2\right\|_{\infty,1}\cdot \left\|a\right\|_{\infty,1}$ are small enough, one has 
	\begin{align*}
C \left\{(K(v) + H(v))K(v) 
+ (K(\tilde{v}) +  H(\tilde{v}))K(\tilde{v})\right\} <1,
	\end{align*}
and therefore  by \eqref{eq:N} one has $K(v - \tilde{v})=0$ on $(0,T_0)$ and $H(v - \tilde{v})=0$ on $[0,T_0]$ for some $0<T_0\leq T$. Iterating this argument it follows that the solutions are unique on $[0,T]$.


\end{proof}

\vspace{.2cm}\noindent
\begin{lemma} \label{lem5.4} 
Let $A,\ve > 0$ be constants and assume that $\{H_m\} \subset \R$ and  $\{K_m\} \subset \R$ are sequences satisfying 
\begin{align*}
H_0    &\leq A,  \qquad H_{m+1} \leq A + C_1 H_m K_m,\\
K_0 &\leq \varepsilon, \qquad
K_{m+1} \leq \varepsilon + C_2 K^{3/2}_m H^{1/2}_m + (4A)^{-1} K_m H_m,
\end{align*}
for all $m\geq 0$, where $C_1>0$ and $C_2>0$ are constants independent of $m$. Then there exists $\varepsilon_0 = \varepsilon_0(C_1,C_2,A)>0$ such that $\{K_m\}$ and $\{H_m\}$ 
are bounded sequences provided that $\varepsilon\leq\varepsilon_0$.
\end{lemma}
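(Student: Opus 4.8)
The plan is to prove by induction on $m$ that a suitable a priori ansatz is preserved under the two recursions, from which the boundedness of both sequences is immediate. Concretely, I would fix the claim that $H_m \leq 2A$ and $K_m \leq 4\varepsilon$ hold for every $m \geq 0$, and then show that if $\varepsilon$ is small enough these two bounds propagate from step $m$ to step $m+1$. The base case is trivial, since $H_0 \leq A \leq 2A$ and $K_0 \leq \varepsilon \leq 4\varepsilon$.

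For the inductive step, assume $H_m \leq 2A$ and $K_m \leq 4\varepsilon$. The $H$-recursion is handled immediately: $H_{m+1} \leq A + C_1 H_m K_m \leq A(1 + 8 C_1 \varepsilon)$, so that $H_{m+1} \leq 2A$ as soon as $\varepsilon \leq (8C_1)^{-1}$. For the $K$-recursion I would estimate the three summands separately. The decisive point is that the coefficient $(4A)^{-1}$ is tuned precisely so that, under $H_m \leq 2A$, the last term obeys $(4A)^{-1} K_m H_m \leq \tfrac{1}{2} K_m \leq 2\varepsilon$; this factor of one half is exactly what renders the recursion contractive. The middle term is genuinely of higher order: $C_2 K_m^{3/2} H_m^{1/2} \leq C_2 (4\varepsilon)^{3/2}(2A)^{1/2} = 8\sqrt{2}\,C_2\sqrt{A}\,\varepsilon^{3/2}$, which is bounded by $\varepsilon$ once $\varepsilon \leq (128 C_2^2 A)^{-1}$. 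Adding the constant term $\varepsilon$, the higher-order term ($\leq \varepsilon$), and the linear term ($\leq 2\varepsilon$) then gives $K_{m+1} \leq 4\varepsilon$, closing the induction. Equivalently, one can phrase this as the geometric contraction $K_{m+1} \leq \varepsilon + \tfrac{3}{4}K_m$, which makes the uniform bound transparent.

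Accordingly I would set $\varepsilon_0 := \min\{(8C_1)^{-1},\,(128 C_2^2 A)^{-1}\}$, which depends only on $C_1, C_2, A$ as required. There is no serious analytic obstacle: the argument is a routine bootstrap. The only delicate point is guessing the correct a priori window. One must \emph{not} attempt the tighter hypothesis $K_m \leq 2\varepsilon$, for then the contribution $(4A)^{-1}K_m H_m \approx \varepsilon$ together with the constant term would already exhaust the budget and the induction would fail to close; allowing the slightly larger window $K_m \leq 4\varepsilon$ leaves exactly enough room for the half-contraction coming from the last term to absorb both the constant term $\varepsilon$ and the higher-order term. With this choice uniform boundedness of $\{H_m\}$ and $\{K_m\}$ follows at once.
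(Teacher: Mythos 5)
Your proof is correct and follows essentially the same route as the paper: a simultaneous induction preserving the invariant region $H_m\leq 2A$, $K_m$ small, with the coefficient $(4A)^{-1}$ providing the half-contraction in the $K$-recursion. The only difference is cosmetic: where the paper takes the $K$-window to be the smallest fixed point $x_0(\varepsilon)$ of the majorizing map $x\mapsto\varepsilon+\sqrt{2A}\,C_2x^{3/2}+x/2$, you use the explicit window $K_m\leq 4\varepsilon$, which has the mild advantage of yielding a computable $\varepsilon_0$.
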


\begin{proof}
Note first that if $K_m\leq 1/(2C_1)$ for $m\leq m_0$, then $H_m\leq 2A$ for all $m\leq m_0+1$. Next, we choose $\varepsilon$ small enough so that the graphs of $y=x$ and 
$y=\varepsilon+\sqrt{2A}C_2x^{3/2}+x/2$ have an intersection. Denote by  $x_0(\varepsilon)$ the abscissa of the intersection point closest to $x=0$. 
Clearly $x_0(\varepsilon)\downarrow 0$ as $\varepsilon\to 0$.

Choose now  $\varepsilon_0$ so small that $x_0(\varepsilon_0)<1/(2C_1)$. Then, $K_m\leq x_0(\varepsilon)$ and $H_m \leq 2A$ for all $m \geq 1$ provided $\ve \leq \ve_0$.  
Indeed, we proved this by induction. The estimate is trivial for $m=1$. Assume that $K_m\leq x_0(\varepsilon),\ H_m\leq 2A$ for all $m\leq m_0$. Since 
$x_0(\varepsilon)<1/(2C_1)$, the inequality for $H_m$ implies $H_{m+1}\leq 2A$ and the inequality for $K_m$ implies $K_{m+1}\leq x_0(\varepsilon)$ by the choice of 
$x_0(\varepsilon)$ since $H_m\leq 2A$. We thus conclude that $K_m \leq x_0(\varepsilon)$ and $H_m\leq 2A$.
\end{proof}
%

%

The solution $v$ constructed in Theorem \ref{thm1.1} exists at least for some nontrivial time interval $[0,T)$, where $T>0$ depends on $a$. Given  $a\in BUC_\sigma(L^p)$ for some $p>1$ we 
are unfortunately unable to estimate $T$ from below by terms involving the norm of $a$, only.  However, in Proposition~\ref{cor1} we claim that 
 $v \in C\left( [0,T),BUC_\sigma(L^p) \right)$ for all $p \in (1,\infty)$ in the same time interval.
%
%
%
%

\vspace{.2cm}\noindent
\begin{proof}[Proof of Proposition~\ref{cor1}]
We estimate the integral equation \eqref{5.1} by writting $S(t)=S(\tfrac{t}{2})S(\tfrac{t}{2})$ and then using the $L^p$-$L^1$-estimate from Remark~\ref{rem:l1lq} and Proposition~\ref{prop5.2} with $\alpha=0$ to obtain 
		\begin{align*}
		\|v(t)\|_{\infty,p} 
		& \leq \|S(t)a\|_{\infty,p} 
		+ C \int^t_0 (t-s)^{-(1-1/p)/2} (t-s)^{-1/2} \|v(s)\|_{1,\infty,1} \|v(s)\|_{\infty,1} \mathrm{d}s, 
		\\
		& \leq \|S(t)a\|_{\infty,p} 
		+ C \int^t_0(t-s)^{-(1-1/2p)} s^{-1/2} s^{1/2}\|v(s)\|_{1,\infty,1} \|v(s)\|_{\infty,1} \mathrm{d}s, \quad t>0. 
		\end{align*}
Since $a \in L^\infty_\sigma(L^p)$ and $v \in L^\infty\left( 0,T; L^\infty_\sigma(L^1) \right)$ by Theorem \ref{thm1.1} we see that $t^{1/2-1/2p} v$ is in $L^\infty\left( 0,T; L^\infty_\sigma(L^p) \right)$.
%

First note that
		\[
		\nabla S(t) = \nabla S\big(\tfrac{t}{3}\big)S\big(\tfrac{t}{3}\big)S\big(\tfrac{t}{3}\big), \quad t>0.
		\]
Differentiating \eqref{5.1}, applying first Proposition \ref{thm4.5} (i), second using the $L^p$-$L^1$-estimate from Remark~\ref{rem:l1lq} and third, applying Proposition~\ref{prop5.2} with $\alpha\in (0,1)$ arbitrary yields 
		\[
		\|\nabla v(t)\|_{\infty,p} \leq \|\nabla S(t)a\|_{\infty,p}
		+ C \int^t_0 (t-s)^{-(1-1/2p+(1-\alpha)/2)} s^{-(1+\alpha)/2} s^{(1+\alpha)/2}\|v(s)\|^{1+\alpha}_{1,\infty,1} \|v(s)\|^{1-\alpha}_{\infty,1} \mathrm{d}s.
		\]
This yields the desired bound for $t^{1-2/p}\nabla v$. The continuity of $v$ follows from strong continuity of $S$.
\end{proof}

%
%

\vspace{.2cm}\noindent
\begin{proof}[Proof of Proposition~\ref{cor2}]
We argue similarly as in the proof of Theorem \ref{thm1.1}. Setting
		\[
		L_m (t) := \sup_{0<\tau<t} \tau^\mu ||v_m(\tau)\|_{1,\infty,1}, \quad 0<t<T,
		\]
we obtain by \eqref{5.4} for $m\ge 0$ and $t\in (0,T)$
		\[
		H_{m+1}(t) \leq \|a\|_{\infty,1}+C_1 t^{1/2-\mu} L_m(t)H_m(t)
		\]
instead of \eqref{5.6}.
Similarly, instead of \eqref{5.9}, we now obtain 
		\[
		L_{m+1}(t) 
		\leq \|a\|_{\infty,1}
		+[a]_\mu
		+C_3 t^{(1/2-\mu)/2} L^{3/2}_m(t)H_m(t)^{1/2}
		+C_4 t^{1/2} L_m(t)H_m(t).
		\]
		%
%
%
%
%
It follows that if $T$ fulfills $1/T \geq  \min\left(c_*\vertiii{a},1\right)^{2/(1/2-\mu)}$ 
for  some $c_*$ independent of $a$, then $(L_m)$ and $(H_m)$ are bounded sequences for $t \in [0,T]$. Moreover, $\{v_m\}$ is a Cauchy sequence in $C\left([0,T],BUC_\sigma(L^1)\right)$, which 
is proved  as before.
\end{proof}

Since $e^{t\Delta_H}$ as well as $\PP$ and the nonlinearity leave horizontal periodicity invariant we obtain the following.
\begin{lemma}\label{lemma:per}
If $a$ is in addition to the assumption of Theorem~\ref{thm1.1} periodic with respect to the horizontal variables, so is the solution $v(t)$ for $t>0$.
\end{lemma}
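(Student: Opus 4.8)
The plan is to exploit the construction of $v$ as the limit of the iteration scheme $(v_m)$ from the proof of Theorem~\ref{thm1.1} and to show that horizontal periodicity is propagated along this scheme. Fix a lattice $\mathcal{L}\subset\R^2$ of horizontal periods of $a$ and, for $\ell\in\mathcal{L}$, denote by $\tau_\ell$ the horizontal translation $(\tau_\ell f)(x',z):=f(x'+\ell,z)$. A function $f$ on $\R^2\times J$ is horizontally periodic precisely when $\tau_\ell f=f$ for all $\ell\in\mathcal{L}$, so it suffices to check that every operation entering the iteration commutes with $\tau_\ell$.

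First I would record these commutation properties. The horizontal heat semigroup $e^{t\Delta_H}$ is a convolution in $x'$ and therefore commutes with $\tau_\ell$, while $e^{t\Delta_N}$ acts only in the vertical variable; hence $S(t)=e^{t\Delta_H}\otimes e^{t\Delta_N}$ commutes with $\tau_\ell$. The horizontal differential operators $\nabla_H$, $\divh$ and $\partial_z$, the vertical average $\overline{\,\cdot\,}$, and the translation-invariant operator $(-\Delta_H)^{-1}$ all commute with $\tau_\ell$ as well; consequently so does the hydrostatic Helmholtz projection $\PP f=f+\nabla_H(-\Delta)^{-1}\divh\overline{f}$. Finally, since $\tau_\ell$ satisfies $\tau_\ell(fg)=(\tau_\ell f)(\tau_\ell g)$ on pointwise products and the reconstruction $w=\int_z^{z_1}\divh v\,\mathrm{d}x_3$ commutes with $\tau_\ell$, the whole nonlinear term $S(t-s)\PP\nabla\cdot(u\otimes v)$ commutes with $\tau_\ell$.

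With these observations the argument is an induction on $m$. Since $a$ is horizontally periodic, $v_0(t)=S(t)a$ is periodic by the commutation of $S(t)$ with $\tau_\ell$. Assuming $v_m(\cdot)$ is horizontally periodic, the reconstruction yields a periodic $w_m$, hence a periodic $u_m=(v_m,w_m)$, and applying $\tau_\ell$ to the defining formula for $v_{m+1}$ and moving it through $S$, $\PP$, $\nabla\cdot$ and the product shows $\tau_\ell v_{m+1}=v_{m+1}$. Thus every iterate is horizontally periodic.

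It remains to pass to the limit. By Theorem~\ref{thm1.1} the sequence $(v_m-Sa_2)$ converges in $C([0,t_0];BUC_\sigma(L^1))$, and $Sa_2$ is itself periodic because $a_2$ is. Since $\tau_\ell$ acts as an isometry on $BUC_\sigma(L^1)$, the relation $\tau_\ell v_m=v_m$ passes to the limit, giving $\tau_\ell v(t)=v(t)$ for all $\ell\in\mathcal{L}$ and all $t$ in the existence interval; that is, $v(t)$ is horizontally periodic. Applying the same argument to the convergence of $(t^{1/2}\nabla(v_m-Sa_2))$ in $C([0,t_0];L^\infty(L^1))$ shows that $\nabla v(t)$ is periodic as well. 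This lemma is essentially bookkeeping, and the only point requiring slight care is the nonlocal factor $(-\Delta_H)^{-1}$ in $\PP$, whose translation invariance nonetheless makes its commutation with $\tau_\ell$ immediate.
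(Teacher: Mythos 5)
Your proof is correct and follows exactly the paper's reasoning: the paper justifies this lemma solely by the observation that $e^{t\Delta_H}$, $\PP$ and the nonlinearity leave horizontal periodicity invariant, and you have simply written out in detail the resulting induction along the iteration scheme and the passage to the limit.
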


\begin{proof}[Proof of Theorem \ref{thm1.2}]
In order to extend the local solutions to a global one we make use of the  regularization of the solution. By Theorem~\ref{thm1.1} $v(t_0),\nabla v(t_0)\in BUC(L^1)$ for $t_0>0$, in particluar $v(t_0)\in BUC(W^{1,1})$, and since $W^{1,1}(J)\hookrightarrow L^p(J)$ for all $p\in [1,\infty]$, one has $v(t_0)\in BUC(L^{p})$. Now, by Proposition~\ref{cor1} $v(t_1),\nabla v(t_1)\in BUC(L^p)$ for $t_1>t_0$, and in particular, using Lemma~\ref{lemma:per}, where w.l.o.g. the period is $1$, one has for the restriction
$$v(t_1)\vert_{[0,1]^2\times J}\in \{v\in W^{1,p}([0,1]^2\times J)\mid v \hbox{ periodic in } x,y,  \div_H \overline{v}=0\} \quad 0<t_1<T.$$ 
Using this for $p\geq 2$ as new initial value, it follows using e.g. \cite{GigaGriesHieberHusseinKashiwabara2017} that $v$ extends to a global strong solution. 

%

\end{proof}

\end{document}